\tikzset{snake it/.style={decorate, decoration=snake}}
\newtheorem{theorem}{Theorem}[section]
\newtheorem{prop}[theorem]{Proposition}
\newtheorem{lemma}[theorem]{Lemma}
\newtheorem{cor}[theorem]{Corollary}
\newcommand{\repeatlabel}{}
\newtheorem*{repeatlemma}{Lemma \repeatlabel}
\theoremstyle{definition}
\newtheorem{problem}{Problem}
\newtheorem{defn}[theorem]{Definition}
\newtheorem{claim}[theorem]{Claim}
\newenvironment{poc}{\begin{proof}[Proof of claim]}{\end{proof}}
\newcommand{\bP}{\mathrm{Pr}}
\newcommand{\cW}{\mathcal{W}}
\newcommand{\de}{\delta}
\newcommand{\e}{\epsilon}
\newcommand{\n}{\nu}
\newcommand{\h}{\eta}
\newcommand{\G}{\Gamma}
\newcommand{\E}{\mathcal{E}}
\DeclareMathOperator{\mix}{mix}
\title{Well-mixing vertices and almost expanders}
\author{Debsoumya Chakraborti\thanks{
Discrete Mathematics Group, Institute for Basic Science (IBS), Daejeon, South Korea.
E-mail: {\tt \{debsoumya, jinhakim\}@ibs.re.kr}. Supported by the Institute for Basic Science (IBS-R029-C1).
}
\and 
Jaehoon Kim\thanks{Department of Mathematical Sciences, KAIST, South Korea. E-mail: {\tt jaehoon.kim@kaist.ac.kr}. Supported by the POSCO Science Fellowship of POSCO TJ Park Foundation and by the KAIX Challenge program of KAIST Advanced Institute for Science-X.}
\and
Jinha Kim\footnotemark[1]\and
Minki Kim\thanks{
Division of Liberal Arts and Sciences, Gwangju Institute of Science and Technology, Gwangju, South Korea.
E-mail: {\tt minkikim@gist.ac.kr}. Supported by the Institute for Basic Science (IBS-R029-C1).
}
\and
Hong Liu\thanks{Extremal Combinatorics and Probability Group (ECOPRO), Institute for Basic Science (IBS), Daejeon, South Korea, Email: {\texttt hongliu@ibs.re.kr}. Supported by the Institute for Basic Science (IBS-R029-C4) and the UK Research and Innovation Future Leaders Fellowship MR/S016325/1.} }
\begin{document}

\maketitle

\begin{abstract}
We study regular graphs in which the random walks starting from a positive fraction of vertices have small mixing time. We prove that any such graph is virtually an expander and has no small separator. This answers a question of Pak~[\emph{SODA}, 2002]. As a corollary, it shows that sparse (constant degree) regular graphs with many well-mixing vertices have a long cycle, improving a result of Pak. Furthermore, such cycle can be found in polynomial time.

Secondly, we show that if the random walks from a positive fraction of  vertices are well-mixing, then the random walks from almost all vertices are well-mixing (with a slightly worse mixing time).
\end{abstract}


\section{Introduction}
Expanders have been extensively studied in the past few decades thanks to their intimate connections with various different fields. It has been used in information theory for the construction of efficient error correcting codes (see e.g. LDPC codes introduced by Gallager~\cite{Gal63} and~\cite{MRRSW20}); and in computer science for randomness extraction~\cite{Gol07} and clustering~\cite{KVV04}, just to name a few. We refer the readers to the survey of Hoory, Linial, and Wigderson~\cite{HLW06} for more comprehensive literature on expanders. The importance of expanders is also reflected by the fact that they have many equivalent definitions. Expanders are the graphs in which every subset of vertices has large boundary. More precisely, given a graph $G$, its \emph{Cheeger constant} is $\min\{\frac{e(A,V(G)\setminus A)}{\mathrm{vol}(A)}: A\subseteq V(G), |A| \leq \frac{1}{2}|V(G)| \}$, where the \emph{volume}, $\mathrm{vol}(A)$, of a set $A$ is the sum of degrees of vertices in $A$. Expanders are graphs with positive Cheeger constant. They can also be defined as the graphs on which the random walk is fast-mixing~\cite{Lov93}, or the ones whose  Laplacian matrices have a spectral gap~\cite{Chee70}.

In reality, having positive Cheeger constant is a very demanding condition as a graph could, at the macroscale, resemble an expander, but have Cheeger constant \emph{zero} due to tiny local noises. In many scenarios (see e.g.~the study of combinatorial maps~\cite{Lou21} and sparse graphs embeddings~\cite{EKK21,crux}), a graph containing a linear-size expander subgraph is as useful as a genuine expander. It is, therefore, of great interest to study the following problem. This natural problem was also proposed by Krivelevich (\cite{K19}, Section~6), see e.g.~\cite{K18, LS20} for work in this direction. 

\begin{problem}\label{main-prob}
	What natural conditions for graphs guarantee a large expander subgraph?
\end{problem}

Yet, another motivation for Problem~\ref{main-prob} comes from the study of sublinear expanders, first introduced by Koml\'os and Szemer\'edi in the 90s~\cite{K-Sz-1}. The theory of sublinear expanders has recently seen a series of advancements, and been involved in the resolutions of several long-standing conjectures~\cite{FKKL,FL21,crux,HHKL21,H-K-L,IKL,KLShS17,LM1,LM2,LWY}. A key lemma of Koml\'os and Szemer\'edi~\cite{K-Sz-1}  asserts that every graph contains a subgraph that is almost as dense and has certain weak expansion properties. There is, unfortunately, no control on how large this expander subgraph must be, as the original graph could be simply a disjoint union of small expanders. Therefore, we could potentially pass from the host graph to a tiny  expander subgraph, which is too small to be useful. This drawback causes lots of difficulties when using sublinear expanders.

In this paper, we study Problem~\ref{main-prob} and show that if the random walks from one of a positive fraction of vertices is well-mixing, then it has an almost spanning induced expander subgraph.

\subsection{Well-mixing vertices, expansion, and separators}\label{sec:mix-exp-sep}
Throughout, all graphs are simple graphs. Let $\G$ be a $D$-regular graph with vertex set $[n] = \{1,2,\ldots,n\}$. Let $\cW = \cW(\G)$ denote the nearest neighbor random walk on $\G$ where in each step, the walk moves to one of the uniformly randomly chosen neighbors (independent of history) of the current vertex. We denote by $Q_v^t$ the distribution of the random walk starting at the vertex $v$ after time step $t$. It is easy to see that if $\G$ is a connected non-bipartite graph, then $Q_i^t(j) \rightarrow \frac{1}{n}$ as $t \rightarrow \infty$, for all $i,j \in [n]$. We use the following notion of distance known as \textit{total variation distance}. For two probability distributions $P_1$ and $P_2$ on $[n]$, define: $$\|P_1 - P_2\| = \max_{A \subseteq [n]} |P_1(A) - P_2(A)| = \frac{1}{2}\sum_{j \in [n]} |P_1(j) - P_2(j)|.$$ 

Let $U$ denote the uniform distribution on $[n]$, i.e., $U(j) = \frac{1}{n}$ for all $j \in [n]$. We say the random walk $\cW$ mixes after time step $t$, if we have $\|Q_j^t - U\| < \frac{1}{4}$ for all $j \in [n]$ (the choice of $\frac{1}{4}$ is arbitrary and it can be replaced with any small constant). We define the mixing time $\mix(\G)$ to be the smallest such $t$, which will be usually denoted by $\tau$.

It is well-known that small mixing time and edge expansion are qualitatively equivalent. More precisely, the \emph{conductance} of $\G$, denoted by $\phi(\G)$, is:
$$\phi(\G) = \min_{\varnothing\neq A\subsetneq [n]} \frac{n\cdot e(A,[n]\setminus A)}{D\cdot |A| (n-|A|)}.$$
Then large conductance is equivalent to small mixing time~\cite{Lov93}:
\begin{equation}\label{eq:conductance}
	\frac{1}{\phi(\G)}< \mix(\G) < \frac{16\log_2n}{\phi(\G)^2}.
\end{equation}

Similar to having positive Cheeger constant, having small mixing time is also a strict condition due to microscale noise. Indeed, consider the graph $G$ which is a disjoint union of a $D$-regular expander $\G$ and a clique $K_{D+1}$ with $\G$ much larger than the clique. Note that the random walks from any vertex in $\G$ (which are the majority vertices) mixes well rapidly as the uniform distribution on $\G$ is close to the one on $G$ due to the clique being much smaller than $G$. However, $G$ does not have finite mixing time because of the isolated clique $K_{D+1}$. That brings us to the natural guess that if random walks from some vertices are well-mixing, then at least on macroscopic level, the graph should behave like an expander.

\medskip\noindent\textbf{Expander.} Our first result confirms the above suspicion, showing that if the random walks from a positive fraction of starting vertices have small mixing time, then the graph is an expander after possibly deleting a small number of vertices. We informally call a vertex \textit{well-mixing} if the random walk starting from the vertex has small mixing time.

\begin{theorem} \label{thm1}
	Let $\e$ and $\de$ be real numbers satisfying $0 < \e < \frac{2}{5}$ and $0 < \de \le \frac{1}{30}$. Suppose that $\G$ is a $D$-regular $n$-vertex graph on vertex set $[n]$ such that $\|Q^{\tau}_v-U\| < \de$ for at least $\e n$ vertices $v \in [n]$. Then, there exists a set $V'$ of at most $5\de n$ vertices such that the induced graph $\G'$ on $V(\G) \setminus V'$ satisfies the following where $n'=|\G'|$. Every vertex set $X \subseteq V(\G')$ of size at most $\frac{1}{2}n'$ satisfies $e(X, V(\G') \setminus X) \ge \frac{\e D}{16\tau} |X|$. 
\end{theorem}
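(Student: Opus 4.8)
The plan is to take $V'$ to be (the vertex set of) a maximal \emph{sparse cut} of $\G$, to show that any such set is automatically small, and then to show that deleting it leaves an expander. Both points rest on a pair of elementary random-walk estimates making precise the idea that a vertex set with few boundary edges ``traps'' the walk: one bounds how fast a walk started inside such a set can escape, the other how fast a walk started outside can enter. Throughout, $W$ denotes the set of (at least $\e n$) vertices $v$ with $\|Q^\tau_v-U\|<\de$.

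First I would prove the two estimates. Since $\G$ is $D$-regular, its transition matrix $P$ is symmetric, so $\sum_{v\in X}Q^t_v(u)=\sum_{v\in X}(P^t)_{vu}=\sum_{v\in X}(P^t)_{uv}=Q^t_u(X)\le 1$ for all $X\subseteq[n]$, $u\in[n]$, $t\ge0$. Bounding the probability that the walk from $v$ leaves $X$ within $\tau$ steps by a union bound over the step at which it first leaves, then summing over $v\in X$ and inserting this identity, gives $\sum_{v\in X}\mathrm{Pr}[\text{walk from }v\text{ leaves }X\text{ within }\tau\text{ steps}]\le\frac{\tau}{D}\,e(X,[n]\setminus X)$. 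If in addition $v\in W\cap X$ and $|X|\le n/2$, then $Q^\tau_v([n]\setminus X)>\frac12-\de\ge\frac7{15}$, so the walk from $v$ leaves $X$ within $\tau$ steps with probability at least $\frac7{15}$; comparing the two bounds gives
\begin{equation}\tag{A}
e(X,[n]\setminus X)\ \ge\ \frac{D}{3\tau}\,|X\cap W|\qquad\text{whenever }|X|\le \tfrac{n}{2}.
\end{equation}
A symmetric argument (union bound over the step at which the walk from $v\notin X$ first enters $X$, summed over $v\notin X$, together with $Q^\tau_v(X)>\frac{|X|}{n}-\de$ for $v\in W\setminus X$) gives
\begin{equation}\tag{B}
\Bigl(\tfrac{|X|}{n}-\de\Bigr)|W\setminus X|\ \le\ \frac{\tau}{D}\,e(X,[n]\setminus X)\qquad\text{whenever }|X|>\de n.
\end{equation}

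Next I would combine (A) and (B) into a size bound on sparse cuts: if $|X|\le n/2$ and $e(X,[n]\setminus X)<\frac{\e D}{c\tau}|X|$ for some constant $c\ge 6$, then (A) forces $|X\cap W|<\frac{3\e}{c}|X|\le\frac{3\e n}{2c}$, hence $|W\setminus X|>\frac12\e n$, and feeding this into (B) (the case $|X|\le\de n$ being trivial) forces $|X|<\frac{c}{c-2}\de n$. I would then let $V'$ be a maximal (under inclusion) set with $|V'|\le n/2$ and $e(V',[n]\setminus V')<\frac{\e D}{16\tau}|V'|$, taking $V'=\varnothing$ if there is no nonempty such set. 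The size bound with $c=16$ gives $|V'|<\frac87\de n<5\de n$, so $n'=|\G'|>(1-\frac87\de)n$. To finish, suppose $\G'$ does not satisfy the conclusion: some nonempty $X\subseteq V(\G')$ has $|X|\le\frac12 n'$ and $e(X,V(\G')\setminus X)<\frac{\e D}{16\tau}|X|$. Then $S:=V'\cup X$ satisfies $e(S,[n]\setminus S)\le e(V',[n]\setminus V')+e(X,V(\G')\setminus X)<\frac{\e D}{16\tau}|S|$. If $|S|\le n/2$ this contradicts the maximality of $V'$, as $V'\subsetneq S$. If $|S|>n/2$ then $Y:=V(\G')\setminus X=[n]\setminus S$ has $\frac12 n'\le|Y|<\frac{n}{2}$, so $|Y|$ lies in the narrow window $(0.48n,0.5n)$, while $e(Y,[n]\setminus Y)=e(S,[n]\setminus S)<\frac{\e D}{16\tau}(n-|Y|)<\frac{\e D}{14\tau}|Y|$; the sparse-cut size bound then forces $|Y|<1.2\de n$, contradicting $|Y|>0.48n$. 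Hence $\G'$ has the required expansion.

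The main obstacle is getting (A) and (B) right: one must phrase the union bound in terms of the \emph{first} exit (resp.\ entry) time, and use the symmetry of $P$ to replace $\sum_{v\in X}Q^t_v(u)$ by a quantity that is at most $1$. After that the argument is bookkeeping, the only slightly delicate point being that $S=V'\cup X$ may exceed $n/2$; this is dealt with by passing to its complement, which is then itself a sparse cut of size just below $n/2$ to which the size bound still applies. The numerical constants are not tight — the factor $5$ in $5\de n$ leaves ample room above the $\frac87$ obtained above.
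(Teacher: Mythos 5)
Your proof is correct, and the reduction from ``sparse cuts are small'' to the final statement (take a maximal sparse cut $V'$, delete it, and pass to the complement of $V'\cup X$ if it exceeds $n/2$) is essentially the same device the paper uses. Where you genuinely diverge is in the key lemma bounding the size of a sparse cut. The paper first passes to the large connected component $\G^*$, applies a walk-counting inequality (Corollary~\ref{path}, the Mulholland--Smith/Dress--Gutman bound $W_k\ge nd^k$) to $\G[X]$ to show that a $\tau$-step walk started uniformly in a sparse cut $X$ ends in $X$ with probability $>1-\e/4$, and then derives a contradiction by locating enough good vertices inside $X$ or inside its complement. Your estimates (A) and (B) replace this with a union bound over the first exit (resp.\ first entry) time, combined with the doubly-stochastic identity $\sum_{v\in X}Q^t_v(u)\le 1$, to control $\sum_{v\in X}\Pr[\text{exit}\le\tau]$ and $\sum_{v\notin X}\Pr[\text{entry}\le\tau]$ directly by $\frac{\tau}{D}\,e(X,[n]\setminus X)$. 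This is more elementary (no appeal to the power-mean walk inequality), it handles the disconnected case automatically (the union of small components is itself a sparse cut that your size bound absorbs into $V'$, making the paper's detour through $\G^*$ and the triangle inequality~\eqref{eq:heart} unnecessary), and it yields a cleaner universal statement: any sparse cut with $|X|\le n/2$ has $|X|<\frac{c}{c-2}\de n$, rather than ruling out sparse cuts only within a size window. The trade-off is that the walk-counting route gives a multiplicative $(1-\e/8\tau)^\tau$ bound that is in principle sharper, though for the constants in this theorem the two routes are quantitatively comparable.
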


It is worth pointing out that the bound $\frac{\e D}{16\tau}|X|$ we get above relating the mixing time and the edge-expansion in a graph is optimal up to a multiplicative $\log{\tau}$-term. To see this, consider a $D$-dimensional hypercube $Q^D$ on $n=2^D$ vertices, which has mixing time $\tau = O(D\log D) = O(\log n \log\log n )$ from every vertex. Take $X\subseteq V(Q^D)$ to be the vertex subset of a $(D-1)$-dimensional subcube. Then it is not hard to see that $X$ has edge-expansion $e(X,V(Q^D)\setminus X)\leq \frac{D \log{\tau}}{\tau }|X|$.
Also, in the conclusion above, deleting $5\delta n$ vertices cannot be completely omitted, as most of vertices in the disjoint union of a $(1-\delta)n$-vertex expander and a $\delta n$-vertex expander are well-mixing, but the smaller component does not have edge-expansion.

\medskip\noindent\textbf{Separator.} The property of containing well-mixing vertices has also close relation with another central notion, separators, in structural graph theory. Separators measure the connectivity of large vertex sets in a graph. Formally, given an $n$-vertex graph $G$, a vertex set $S\subseteq V(G)$ is a \emph{separator} if the removal of $S$ partitions the graph into disjoint subgraphs each with at most $2n/3$ vertices.

Since the fundamental work of Lipton and Tarjan~\cite{LT79} that every $n$-vertex planar graph has a separator of size $O(\sqrt{n})$, separators have been playing a key role in devising efficient divide and conquer algorithms and dynamic programming, see e.g.~\cite{FLT92}. It is known that having no small separator is qualitatively equivalent to vertex expansion, and it implies that there is a large expander subgraph (\cite{K19}, Section~5). 

As an immediate corollary of Theorem~\ref{thm1}, we see that having linearly many well-mixing vertices is a stronger property, implying in particular that there is no small separator.
\begin{cor}\label{cor:sep}
		Let $\e$ and $\de$ be real numbers satisfying $0 < \e < \frac{2}{5}$ and $0 < \de \le \frac{1}{30}$. Suppose that $\G$ is a $D$-regular $n$-vertex graph on vertex set $[n]$ such that $\|Q^{\tau}_v-U\| < \de$ for at least $\e n$ vertices $v \in [n]$. Then, $\G$ has no separator of size smaller than $\frac{\e}{48\tau} n$.
\end{cor}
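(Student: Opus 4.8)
The plan is to read this off Theorem~\ref{thm1} via the standard dichotomy between edge-expansion and the absence of small separators, the only wrinkle being that Theorem~\ref{thm1} produces an expander only after deleting a small vertex set. So I would argue by contradiction. Suppose $\G$ has a separator $S$ with $|S|<\frac{\e}{48\tau}n$; as $\frac{\e}{48\tau}<\frac13$ this forces $|S|<\frac13 n$, so $\G-S$ has more than $\frac23 n$ vertices and, not being a single component, splits into at least two components, each of size at most $\frac23 n$. Apply Theorem~\ref{thm1} with the given $\e,\de$ to obtain $V'$ with $|V'|\le 5\de n$ such that $\G':=\G[V(\G)\setminus V']$, on $n':=|V(\G')|\ge(1-5\de)n$ vertices, satisfies $e_{\G'}(X,V(\G')\setminus X)\ge\frac{\e D}{16\tau}|X|$ for all $X\subseteq V(\G')$ with $|X|\le\frac12 n'$.

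Next I would manufacture a test set for $\G'$ from $S$. Listing the components of $\G-S$ in non-increasing order of size, let $X_0$ be the first component if it alone has at least $\frac13 n$ vertices, and otherwise let $X_0$ be the union of a shortest initial segment of components whose total size reaches $\frac13 n$; since in the latter case every component has fewer than $\frac13 n$ vertices, in both cases $\frac13 n\le|X_0|\le\frac23 n$. Because $X_0$ is a union of components of $\G-S$, every edge of $\G$ with exactly one endpoint in $X_0$ has its other endpoint in $S$, so $e_\G(X_0,V(\G)\setminus X_0)\le D|S|$. Set $X:=X_0\cap V(\G')$, and replace $X$ by $V(\G')\setminus X$ if $|X|>\frac12 n'$. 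In either case $X\subseteq V(\G')$, $|X|\le\frac12 n'$, every $\G'$-edge leaving $X$ still ends in $S$ (so $e_{\G'}(X,V(\G')\setminus X)\le D|S|$), and since $|X_0|,|V(\G)\setminus X_0|\ge\frac13 n$, $|V'|\le 5\de n$, and $n'\ge(1-5\de)n$, we get $|X|\ge cn$ for a positive constant $c$ depending only on $\de$ (with $c\to\frac13$ as $\de\to 0$).

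Now Theorem~\ref{thm1} gives $\frac{\e D}{16\tau}|X|\le e_{\G'}(X,V(\G')\setminus X)\le D|S|$, so $|S|\ge\frac{\e}{16\tau}|X|\ge\frac{\e c}{16\tau}n$; tracking the constants — this is where the hypotheses $\de\le\frac1{30}$ and $\e<\frac25$ are used — gives $|S|\ge\frac{\e}{48\tau}n$, contradicting the choice of $S$. The degenerate cases (where $X$ or its replacement would be empty) cannot occur here because $|X_0|,|V(\G)\setminus X_0|\ge\frac13 n$ while $|V'|\le\frac16 n$.

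I expect the only real work to be the bookkeeping at the $\G/\G'$ interface, which is also what pins down the constants: since $X_0$ is defined inside $\G$ but Theorem~\ref{thm1} concerns $\G'$, one must (i) re-normalise so that $|X|\le\frac12 n'$ by possibly passing to the complement, (ii) check that deleting at most $5\de n$ vertices still leaves $|X|$ a fixed positive fraction of $n$, and (iii) confirm that the edge boundary of $X$ in $\G'$ is still at most $D|S|$. None of this is deep; the scenario that the constant is really squeezed against is when $\G-S$ has one dominant component that $V'$ happens to miss.
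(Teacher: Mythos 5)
Your overall strategy --- read a test set off the separator and plug it into the edge-expansion bound --- is the standard and correct deduction, and the construction of $X_0$ and $X$ is fine. The gap is in the final constant-tracking, which you assert rather than verify. Working it out, the worst case is $|X_0|=\tfrac13 n$ with $V'\subseteq X_0$, in which $|X|=\min\bigl(|X_0\cap V(\G')|,\,|V(\G')\setminus X_0|\bigr)$ can be as small as $(\tfrac13-5\de)n$; so the constant $c$ in your $|X|\ge cn$ is $c=\tfrac13-5\de$, not $\tfrac13$. Theorem~\ref{thm1} then gives only
\[
|S| \;\ge\; \frac{\e}{16\tau}\,|X| \;\ge\; \frac{\e\bigl(\tfrac13-5\de\bigr)}{16\tau}\,n,
\]
which is strictly smaller than $\frac{\e}{48\tau}n=\frac{\e\cdot\frac13}{16\tau}n$ for every $\de>0$, and at $\de=\tfrac1{30}$ is only $\frac{\e}{96\tau}n$. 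So the route through Theorem~\ref{thm1} misses the stated constant by up to a factor of $2$.

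To actually get $\frac{\e}{48\tau}n$, apply Theorem~\ref{connected} directly to the giant component $\G^*$ rather than going through Theorem~\ref{thm1}: one has $n^*\ge(1-\de)n$, all $\e n$ good vertices lie in $\G^*$, and by~\eqref{eq:heart} they satisfy $\|Q^{\tau}_v(\G^*)-U(\G^*)\|<2\de$. Build $X$ exactly as you did but from the components of $\G-S$ that lie inside $\G^*$; then $4\de n^*\le\frac{2}{15}n^*<|X|\le\frac12 n^*$ with $|X|\ge(\tfrac13-\de)n\ge\frac{3}{10}n$, and every $\G^*$-edge leaving $X$ meets $S$. Theorem~\ref{connected} then yields
\[
D|S|\;\ge\;e_{\G^*}\bigl(X,V(\G^*)\setminus X\bigr)\;\ge\;\frac{\e D}{8\tau}\,|X|\;\ge\;\frac{3\e D}{80\tau}\,n\;>\;\frac{\e D}{48\tau}\,n,
\]
the required contradiction (indeed a stronger bound). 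This avoids the $5\de n$ loss from deleting $V'$ and exploits the sharper expansion factor $\frac{\e D}{8\tau}$ available in the connected case.
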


The converse above is not true. That is, there are graphs with no small separators and have no well-mixing vertices. Indeed, take two disjoint $n$-vertex $D$-regular expanders and add a perfect matching between them. If $D$ is large, say $D=n^{\Omega(1)}$, then no vertex in the resulting graph has polylogarithmic mixing time, and clearly there is no small separator. 

Corollary~\ref{cor:sep}, together with the celebrated result of Plotkin, Rao and Smith~\cite{PRS94}, also implies that regular graphs with linearly many well-mixing vertices contain large clique minors. We refer the readers to~\cite{KN21} for recent results on edge expansion and clique minors.

\begin{cor}\label{cor:minor}
	Let $\e$ and $\de$ be real numbers satisfying $0 < \e < \frac{2}{5}$ and $0 < \de \le \frac{1}{30}$. Suppose that $\G$ is a $D$-regular $n$-vertex graph on vertex set $[n]$ such that $\|Q^{\tau}_v-U\| < \de$ for at least $\e n$ vertices $v \in [n]$. Then, $\G$ has a shallow clique minor of order $\Omega\Big(\frac{\e}{\tau}\sqrt{\frac{n}{\log n}}\Big)$.
\end{cor}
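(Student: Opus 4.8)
The plan is to deduce this directly from Corollary~\ref{cor:sep} together with the separator--shallow-minor trade-off of Plotkin, Rao and Smith~\cite{PRS94}. Recall that their theorem says that for every integer $d\ge 1$, any $n$-vertex graph containing no $K_h$-minor of depth at most $d$ admits a separator of size $O\!\left(\frac{n}{d}+d\,h^{2}\log n\right)$. I would use this in contrapositive form: by Corollary~\ref{cor:sep}, under the hypotheses of the statement $\G$ has no separator of size smaller than $s:=\frac{\e}{48\tau}n$, so for an appropriately chosen depth $d$ the graph $\G$ is forced to contain a clique minor of order roughly $\sqrt{s/(d\log n)}$.

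The only thing to get right is the choice of $d$, which must make both terms of the Plotkin--Rao--Smith estimate $o(s)$ at once. Taking $d=\Theta(n/s)=\Theta(\tau/\e)$ makes the first term $\frac{n}{d}$ at most a small constant multiple of $s$; if in addition $\G$ had no $K_h$-minor of depth at most $d$ for some
\[
h=\Theta\!\left(\sqrt{\frac{s}{d\log n}}\right)=\Theta\!\left(\frac{s}{\sqrt{n\log n}}\right)=\Theta\!\left(\frac{\e}{\tau}\sqrt{\frac{n}{\log n}}\right),
\]
then the second term $d\,h^{2}\log n$ would also be at most a small constant multiple of $s$, giving a separator of size less than $s$ and contradicting Corollary~\ref{cor:sep}. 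Hence $\G$ contains a $K_h$-minor of depth $O(\tau/\e)$ with $h=\Omega\!\left(\frac{\e}{\tau}\sqrt{\frac{n}{\log n}}\right)$, which is exactly the claimed shallow clique minor.

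There is no genuinely hard step here --- all the work sits in Theorem~\ref{thm1} (and hence in Corollary~\ref{cor:sep}) and in the cited result of Plotkin, Rao and Smith, and the corollary is obtained by a one-line optimisation over the depth parameter. The only mild subtlety is bookkeeping the absolute constants so that the depth $d=\Theta(\tau/\e)$ simultaneously controls both the additive $n/d$ term and the $d\,h^{2}\log n$ term below the separator threshold $s$; once the constants are chosen this is immediate. Alternatively, one could first use the standard equivalence between the absence of small separators and vertex expansion to pass to a vertex-expanding subgraph and then invoke the shallow-minor bound there, but applying \cite{PRS94} to $\G$ directly is the most economical route.
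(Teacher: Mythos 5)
Your proposal is correct and is exactly the argument the paper has in mind: the paper gives no separate proof of Corollary~\ref{cor:minor}, merely noting it follows from Corollary~\ref{cor:sep} together with the Plotkin--Rao--Smith theorem, and your optimisation over the depth parameter $d=\Theta(\tau/\e)$ to extract $h=\Theta\bigl(\frac{\e}{\tau}\sqrt{n/\log n}\bigr)$ is the intended one-line deduction.
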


See Figure \ref{figure} for a diagram that summarizes the connections between well-mixing vertices, expansion, and separators. Note that the qualitative equivalence of edge expansion and having linearly many well-mixing vertices is a consequence of Theorem~\ref{thm1} and~\eqref{eq:conductance}.

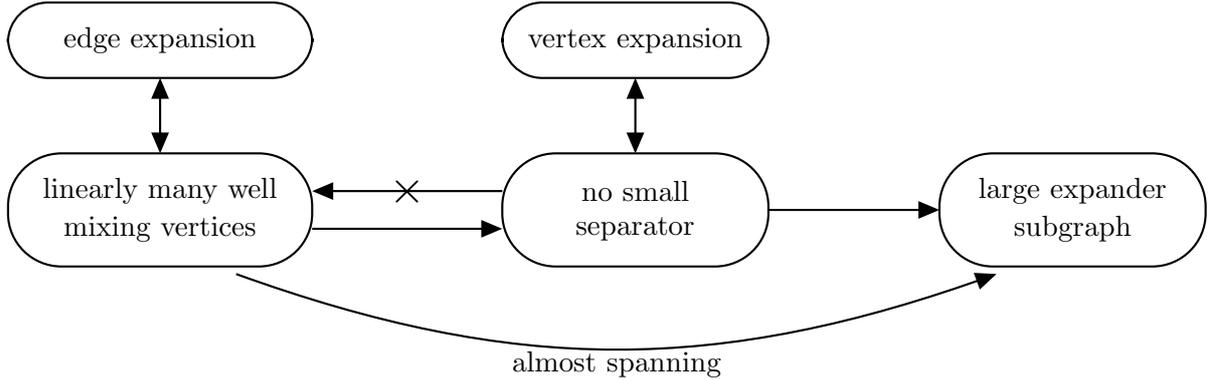
\begin{figure}[h]
\begin{tikzpicture}
\node at (0,0) {linearly many well};
\node at (0,-0.5) {mixing vertices};
\draw [rounded corners=20pt, thick] (-2,-1) rectangle (2,0.5);

\node at (0,2) {edge expansion};
\draw [rounded corners=15pt, thick] (-2,1.5) rectangle (2,2.5);

\node at (6.25,0) {no small};
\node at (6.25,-0.5) {separator};
\draw [rounded corners=20pt, thick] (4.5,-1) rectangle (8,0.5);

\node at (6.25,2) {vertex expansion};
\draw [rounded corners=15pt, thick] (4.5,1.5) rectangle (8,2.5);

\node at (12,0) {large expander};
\node at (12,-0.5) {subgraph};
\draw [rounded corners=20pt, thick] (10.25,-1) rectangle (13.75,0.5);

\draw [>=triangle 45, <->, thick] (0,0.5) -- (0,1.5);
\draw [>=triangle 45, <->, thick] (6.25,0.5) -- (6.25,1.5);
\draw [>=triangle 45, ->, thick] (2,-0.5) -- (4.5,-0.5);
\node at (3.25,0) {$\bigtimes$};
\draw [>=triangle 45, <-, thick] (2,0) -- (4.5,0);
\draw [>=triangle 45, ->, thick] (8,-0.25) -- (10.25,-0.25);

\node at (6,-2.3) {almost spanning};

\draw [>=triangle 45, ->, thick] (1,-1.1) to [out=-20,in=-160] (11,-1.1);

\end{tikzpicture}
\caption{Relations between well-mixing vertices, expansion, and separators} \label{figure}
\end{figure}

\subsection{Long cycles in graphs with well-mixing vertices}
It is a classical NP-complete problem to decide if a graph is Hamiltonian (see, e.g., \cite{D00,GJ79,L79}). To gain some light on this matter, there has been a lot of study to bound the length of the longest path/cycle of a graph. There is neither any polynomial time algorithm known with a reasonable approximation ratio nor any hardness result supporting the situation (see, e.g., \cite{FMS00,KMR97}). Thus, it is interesting to look for natural conditions in graphs which enforce existence of long paths/cycles. 

Pak \cite{P02} proved that a regular graph with small mixing time must contain a large path.

\begin{theorem} [\cite{P02}] \label{P1}
Let $\G$ be a $D$-regular graph with $n$ vertices, $\mix(\G)=\tau$ and $D>8\tau^2$. Then $\G$ contains a path of length $\ell>\frac{n}{16\tau}$.
\end{theorem}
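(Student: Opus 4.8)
The plan is to argue by contradiction: assume the longest path $P=v_0v_1\cdots v_\ell$ in $\G$ has length $\ell\le n/(16\tau)$, and derive either a longer path or a contradiction. Two quantitative facts about the walk $\cW$ drive the argument. First, because $D>8\tau^2$, a walk of length $\tau$ from any vertex is self-avoiding with probability more than $\tfrac{15}{16}$: writing $x_0,x_1,\dots$ for the walk, conditioned on $x_0,\dots,x_i$ at most $i$ of the at most $i+1$ vertices $x_0,\dots,x_i$ are neighbours of $x_i$, so $\bP[x_{i+1}\in\{x_0,\dots,x_i\}]\le i/D$, and a union bound over $i<\tau$ gives failure probability below $\tau^2/(2D)<\tfrac1{16}$. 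Second, since $\mix(\G)=\tau$, for every $A\subseteq[n]$ and every start vertex we have $\bP[x_\tau\in A]<|A|/n+\tfrac14$; with $A=V(P)$, of size at most $\ell+1<n/2$, this is below $\tfrac34$. Consequently a $\tau$-step walk started at the endpoint $v_0$ is, with probability at least $\tfrac3{16}>0$, simultaneously self-avoiding and ending outside $V(P)$.

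Fix such a walk $X=(v_0,x_1,\dots,x_\tau)$. Since $P$ is a longest path, $N_\G(v_0)\subseteq V(P)$, so $x_1\in V(P)$ whereas $x_\tau\notin V(P)$; let $j$ be the last index with $x_j\in V(P)$, say $x_j=v_i$. The vertices $x_{j+1},\dots,x_\tau$ are then distinct and outside $V(P)$, so both $v_0v_1\cdots v_i\,x_{j+1}\cdots x_\tau$ and $v_\ell v_{\ell-1}\cdots v_i\,x_{j+1}\cdots x_\tau$ are genuine paths in $\G$, the longer of length $\max(i,\ell-i)+(\tau-j)$. This beats $\ell$ only when $\tau-j>\min(i,\ell-i)$, which may fail when the walk re-enters $V(P)$ near its middle and close to time $\tau$; so a single walk from a single endpoint does not suffice. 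To repair this I would run the walk from vertices of the P\'osa rotation-closure $S$ of $P$ — the set of endpoints of longest paths on vertex set $V(P)$ reachable from $P$ by rotations — using that $N_\G(w)\subseteq V(P)$ for every $w\in S$ (hence $|V(P)|\ge|N_\G(S)|$), and playing this against mixing, which forces a $\tau$-walk from $w\in S$ to mostly avoid the small set $V(P)$ even though its first step already lies in $V(P)$. Controlling where and when the walk re-enters $V(P)$, and converting re-entries into genuine lengthenings through the rotation bookkeeping, is the step I expect to be the main obstacle.

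A cleaner route, which avoids path surgery altogether and is moreover algorithmic, goes through separators and depth-first search. By~\eqref{eq:conductance}, $\mix(\G)=\tau$ forces $\phi(\G)>1/\tau$, so every $A$ with $\varnothing\neq A\subsetneq[n]$ satisfies $e(A,[n]\setminus A)>\tfrac{|A|(n-|A|)}{\tau n}\,D$. Given any separator $R$ of $\G$, group the connected components of $\G-R$ into a set $A$ with $n/3\le|A|\le 2n/3$ (always possible: take one large component if there is one, otherwise combine small ones); then $|A|(n-|A|)\ge\tfrac29 n^2$ and all edges leaving $A$ go to $R$, so $D|R|\ge e(A,[n]\setminus A)>\tfrac{2Dn}{9\tau}$, whence $|R|>\tfrac{2n}{9\tau}$: thus $\G$ has no small separator. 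Finally run depth-first search on $\G$. Its stack is at every moment a path of $\G$, and at the first instant at which the finished vertices number $\lceil n/3\rceil$, that stack separates the finished vertices from the undiscovered ones (no edge joins these two sets), so the stack has more than $\tfrac{2n}{9\tau}$ vertices, giving a path of $\G$ of length more than $\tfrac{2n}{9\tau}-1>\tfrac{n}{16\tau}$ (the last inequality using $n\ge D+1>8\tau^2+1$). Note that this route does not use $D>8\tau^2$, which enters Pak's original, more hands-on argument only through the self-avoiding estimate of the first paragraph.
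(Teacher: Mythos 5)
The paper does not prove Theorem~\ref{P1} itself; it is quoted from Pak~\cite{P02}, and the paper only remarks that Pak's original argument probabilistically builds self-avoiding walks and concatenates them. Your first paragraph is an honest partial reconstruction of that probabilistic route: the estimate that a $\tau$-step walk is self-avoiding with probability $>15/16$ under $D>8\tau^2$ is correct, as is the $<3/4$ bound on landing in a small set, but you correctly identify that a single walk from one endpoint only yields a lengthening when $\tau-j>\min(i,\ell-i)$, and you do not close this gap (Pak's actual fix is to concatenate many short self-avoiding walks into a long path, not P\'osa rotations; either way, you acknowledge it unfinished). So taken alone, that paragraph is a sketch with a declared hole, not a proof.

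Your second route, however, is a complete and correct proof, and it is precisely the ``graph theoretic'' derivation the paper alludes to (``Theorem~\ref{P1} can be derived \ldots via some well-known equivalence between small mixing time and good expansion''). The chain ``$\mix(\G)=\tau \Rightarrow \phi(\G)>1/\tau$ by~\eqref{eq:conductance} $\Rightarrow$ every separator has size $>2n/(9\tau)$ $\Rightarrow$ the DFS stack, at the first instant the finished set reaches $\lceil n/3\rceil$, is a separator and hence a path of that many vertices'' is sound: in a connected graph (which $\G$ is, having finite mixing time) the DFS stack is always a path from the root, finished and undiscovered vertices span no edge, and at that moment both classes have size at most $2n/3$. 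The grouping of components of $\G-R$ into a set $A$ with $n/3\le|A|\le 2n/3$ also works whenever $|R|\le n/3$, and is vacuous otherwise. This mirrors in spirit how the paper proves its stronger Theorem~\ref{algorithm} (via Theorem~\ref{connected} plus Krivelevich's expansion-to-long-cycle Theorem~\ref{K}, which itself is a DFS argument), but yours is more elementary since you only need a path, not a cycle. One small inconsistency: you assert this route ``does not use $D>8\tau^2$,'' yet your final comparison $\tfrac{2n}{9\tau}-1>\tfrac{n}{16\tau}$ invokes $n>8\tau^2+1$. What is true is that the route gives the stronger conclusion of a path of length $>\tfrac{2n}{9\tau}-1$ unconditionally; the hypothesis $D>8\tau^2$ is used only to recover the exact constant $n/(16\tau)$ in Pak's statement.
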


The strategy in \cite{P02} was to probabilistically construct self-avoiding walks and concatenate them to a long path. This method also provides an efficient randomized algorithm for this problem. They further provided a generalization of Theorem \ref{P1} by weakening the mixing time condition to the situation where the random walks from at least $\frac{3}{4}$-fraction of vertices have small mixing time.

\begin{theorem} [\cite{P02}] \label{P2}
Let $\G$ be a $D$-regular graph with $D>8\tau^2$. Suppose $\|Q^{\tau}_v-U\|<\e$ for at least $(1-\n)n$ points $v \in [n]$, where $0 < \e + \n < \frac{1}{4}$. Then $\G$ contains a path of length $\ell> \frac{n}{16\tau}$.
\end{theorem}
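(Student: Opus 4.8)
The plan is to reconstruct Pak's argument for Theorem~\ref{P2}: grow the path one short random-walk segment at a time, using $D>8\tau^2$ to keep each segment self-avoiding and the mixing hypothesis to control where it ends. Put $\ell:=\lfloor n/(16\tau)\rfloor+1$ (so $\ell>n/(16\tau)$) and let $B:=\{v\in[n]:\|Q^\tau_v-U\|\ge\e\}$, so that $|B|\le\n n$ and every $v\notin B$ is well-mixing; since $\n<1$, such vertices exist. We maintain a path $P=p_0p_1\dots p_k$ with one fixed endpoint $p_0$ and a distinguished growing endpoint $v=p_k\notin B$, initialised as a single well-mixing vertex, and we stop as soon as $k=\ell$. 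Since $k\le\ell$ throughout, $|V(P)|\le\ell+1$ at all times.

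To grow $P$, sample a random walk $W=(v=w_0,w_1,\dots,w_\tau)$ of length $\tau$ started at $v$. Let $s$ be the largest index with $w_s\in V(P)$ (so $s\ge0$, since $w_0\in V(P)$) and write $w_s=p_m$. If $W$ is self-avoiding and $w_\tau\notin V(P)$ (equivalently $s<\tau$), the concatenation $p_0p_1\dots p_m\,w_{s+1}w_{s+2}\dots w_\tau$ is a genuine path, since its suffix $w_{s+1}\dots w_\tau$ is self-avoiding and, by maximality of $s$, disjoint from $V(P)$; call it the \emph{candidate}, of length $m+(\tau-s)$ and with growing endpoint $w_\tau$. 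We \emph{accept} the round, replacing $P$ by the candidate, if (i) $W$ is self-avoiding, (ii) $w_\tau\notin V(P)\cup B$, and (iii) the candidate is strictly longer than $P$; otherwise we resample. For (i): conditioning on $w_0,\dots,w_{t-1}$ being distinct, $w_t$ hits one of them with probability at most $t/D$, so $\bP[\neg(\mathrm{i})]\le\sum_{t=1}^{\tau}t/D<\tau^2/(2D)<1/16$ by $D>8\tau^2$. For (ii): since $v$ is well-mixing, $\bP[\neg(\mathrm{ii})]=\bP[w_\tau\in V(P)\cup B]<|V(P)|/n+|B|/n+\e<\tfrac{1}{16}+\n+\e<\tfrac13$, using $|V(P)|\le\ell+1$ and $\e+\n<\tfrac14$. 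Each accepted round lengthens $P$ by at least $1$, so after at most $\ell$ accepted rounds we have a path of length $\ell>n/(16\tau)$; and if every round is accepted with probability bounded below by an absolute constant, the procedure runs in polynomial expected time, which also yields the randomized algorithm.

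The crux is precisely to show (iii), jointly with (i) and (ii), with probability bounded away from $0$. As $\bP[\neg(\mathrm{i})\vee\neg(\mathrm{ii})]<\tfrac1{16}+\tfrac13<\tfrac{5}{12}$, it suffices that the candidate beat $P$ with probability noticeably exceeding $\tfrac{5}{12}$ — i.e.\ that, when $W$ re-enters $V(P)$, it does so at a late time ($s$ small) and close to $v$ along $P$ ($m$ not much less than $k$), so that $m+(\tau-s)>k$ and the fresh suffix outweighs the discarded stub $p_{m+1}\dots p_k$ (in particular this is automatic when $s=0$, i.e.\ when $W$ avoids $V(P)\setminus\{v\}$). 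This reduces to bounding $\sum_{t=1}^{\tau}\bP[w_t\in V(P)]$, and here the mixing hypothesis alone is too weak: it controls $\bP[w_t\in V(P)]$ (by $\lesssim|V(P)|/n+\e$) only for $t\ge\tau$, saying nothing about the intermediate times $t<\tau$, and even for $t\ge\tau$ a union bound over $\tau$ steps loses a factor of $\tau$ against the tiny density $|V(P)|/n\le1/(16\tau)$. The resolution is that $V(P)$ is not adversarial — it is a concatenation of earlier random-walk segments, hence spread out — so a fresh length-$\tau$ walk meets it at each step with probability of order its density and the total is $O(1)$; making this rigorous forces one to analyse the whole construction globally rather than round by round, and I expect it to be the main difficulty. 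Granting it, the remaining points — counting the vertices discarded over the process, checking that $|V(P)|\le\ell+1$ is small enough for (ii), and the termination and running-time bounds — are routine.
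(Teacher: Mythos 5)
The paper does not actually prove Theorem~\ref{P2}; it is quoted verbatim from Pak~\cite{P02}. What the paper contributes is the stronger, deterministic, degree-restriction-free Theorem~\ref{algorithm}, obtained by an entirely different route: Theorem~\ref{thm1} shows that linearly many well-mixing vertices force an almost-spanning induced edge-expander, and Krivelevich's DFS-based Theorem~\ref{K} then extracts a long cycle from it. Indeed, the point the authors emphasise is precisely that Pak believed Theorem~\ref{P2} \emph{could not} be reduced to a graph-expansion statement, and Theorem~\ref{thm1} is designed to overturn that. So there is no in-paper proof of Theorem~\ref{P2} against which to check your reconstruction of Pak's probabilistic argument; your proposal is being judged only against the theorem's statement.

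On its own terms, your sketch captures the intended strategy (grow the path by length-$\tau$ self-avoiding walk segments whose endpoints are fresh and well-mixing), and the estimates for (i) and (ii) are essentially sound. But, as you candidly say, condition~(iii) is left open, and that is a genuine gap rather than a routine omission. The mixing hypothesis controls $\bP[w_t\in V(P)]$ only at $t=\tau$; for $1\le t<\tau$ nothing you have written prevents the walk from re-entering $V(P)$ early and far back along $P$, so that $m+(\tau-s)\le k$ and the candidate is shorter. Without a quantitative bound showing either that such backtracks are rare, or that the path length has positive drift over the run (an amortised or martingale argument exploiting the fact that $V(P)$ is itself assembled from earlier walk segments, as you suggest), one cannot conclude the process ever reaches length $\ell$ — it could in principle oscillate or shrink. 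You have correctly located the crux, but you have not resolved it, so the proposal as written does not constitute a proof.
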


As Pak remarked, Theorem~\ref{P1} can be derived using graph theoretic methods via some well-known equivalence between small mixing time and good expansion of graphs. However, quoting from \cite{P02}, ``to the best of our knowledge, Theorem~\ref{P2} cannot be reduced to a similar result on graph expansion''. 

As a corollary of Theorem~\ref{thm1} and a known result on long cycles in graphs with expansion property, we obtain the following algorithmic result, addressing the above comment of Pak.

\begin{theorem} \label{algorithm}
	Let $\e$ be a real number in $(0,\frac{2}{5})$ and $\G$ be a $D$-regular graph with $n$ vertices. Suppose $\|Q^{\tau}_v-U\| < \frac{1}{30}$ for at least $\e n$ points $v \in [n]$. Then, there is a deterministic polynomial time algorithm to find a cycle of length $\ell> \frac{\e n}{40\tau}$ in $\G$.
\end{theorem}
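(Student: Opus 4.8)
The plan is to obtain Theorem~\ref{algorithm} by running Theorem~\ref{thm1} to pass to an almost-spanning expander and then feeding that expander into a known constructive statement that graphs with good edge-expansion contain long cycles. Since $0<\e<\tfrac25$ and $\tfrac1{30}\le\tfrac1{30}$, the hypothesis $\|Q^\tau_v-U\|<\tfrac1{30}$ for at least $\e n$ vertices lets us apply Theorem~\ref{thm1} with $\de=\tfrac1{30}$: we get a set $V'$ with $|V'|\le 5\de n=\tfrac n6$ such that $\G':=\G[[n]\setminus V']$ has $n':=|V(\G')|\ge\tfrac{5n}6$ vertices, maximum degree at most $D$, and $e(X,V(\G')\setminus X)\ge\tfrac{\e D}{16\tau}|X|$ for every $X\subseteq V(\G')$ with $|X|\le\tfrac{n'}2$. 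Two immediate consequences are recorded for later use: $\G'$ is connected (otherwise its smallest component would be a nonempty set of size at most $\tfrac{n'}2$ with no outgoing edge, contradicting the expansion bound) and every vertex of $\G'$ has degree at least $\tfrac{\e D}{16\tau}$. I would also check that the reduction is polynomial-time: we may assume $\tau\le n$, since otherwise $\tfrac{\e n}{40\tau}<3$ and it is enough to output any cycle, one of which exists and is found by a graph search because $D\ge 2$ (a $0$- or $1$-regular graph is edgeless or a perfect matching, hence has no vertex $v$ with $\|Q^\tau_v-U\|<\tfrac1{30}$, so the hypothesis forces $D\ge2$); with $\tau\le n$ the distributions $Q^\tau_v$, and hence the set of well-mixing vertices, are computable in polynomial time, and the proof of Theorem~\ref{thm1} is constructive, so $V'$ is produced efficiently.

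Next I would invoke the known (algorithmic) fact that a connected $m$-vertex graph $H$ with maximum degree at most $\Delta$ in which $e(X,V(H)\setminus X)\ge h|X|$ for all $X$ with $|X|\le\tfrac m2$ contains, and one can find in polynomial time, a cycle of length at least a fixed constant times $\tfrac{hm}{\Delta}$. The mechanism behind such a statement, which I would cite rather than reprove, is a depth-first search: at every moment $V(H)$ splits into the fully processed set $S$, the current stack $P$ (which forms a path of $H$), and the untouched set $T$, with no edge between $S$ and $T$; freezing the search the first time $|S|$ reaches $\lceil m/2\rceil$ (or when it terminates) gives $h\lfloor m/2\rfloor\le e(S,V(H)\setminus S)=e(S,P)\le\Delta|P|$, so $P$ is a path of length about $\tfrac{hm}{2\Delta}$, and the edge-expansion then supplies enough robust connectivity to reroute this path into a cycle of comparable length. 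Applying the cited statement to $H=\G'$ with $h=\tfrac{\e D}{16\tau}$, $m=n'\ge\tfrac{5n}6$ and $\Delta\le D$ yields a cycle of length $\Omega\!\big(\tfrac{\e n}{\tau}\big)$, and tracking the explicit constant gives a cycle of length greater than $\tfrac{\e n}{40\tau}$; every step is deterministic and polynomial-time.

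The step I expect to demand the most care is the passage from a long path to a long \emph{cycle} inside $\G'$ with essentially no loss in length. The obstruction is that edge-expansion by $\tfrac{\e D}{16\tau}$ only guarantees vertex-expansion by the factor $\tfrac{\e}{16\tau}$, which is far below the factor $2$ that P\'osa-type rotation--extension arguments require, so one cannot simply rotate the endpoints of a longest path until they meet. Instead one must exploit that positive edge-expansion makes the graph robustly connected --- every small vertex cut peels off a set of size only $O(\tau/\e)$ --- and use this to close the path; this is precisely what the cited long-cycle result packages, so in the write-up I would isolate the exact hypotheses it needs and verify that $\G'$ satisfies them. A secondary but real point is bookkeeping the constants, since the gap between $\tfrac{5\e n}{192\tau}$ (the bound my crude DFS estimate produces) and the target $\tfrac{\e n}{40\tau}$ is narrow: I would make sure the invoked bound comes with a clean multiplicative constant and dispose of the finitely many bounded-$n$ cases, where $\tfrac{\e n}{40\tau}$ is below an absolute constant, by brute-force search for a short cycle.
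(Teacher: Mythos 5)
Your overall strategy matches the paper's in spirit---restrict to the large component, derive near-spanning edge-expansion, and then feed this into an algorithmic long-cycle-from-expansion result (the paper uses Krivelevich's Theorem~\ref{K})---but the detour through Theorem~\ref{thm1} does not deliver the advertised constant, and your constant tracking conflates a DFS \emph{path} bound with a \emph{cycle} bound. Concretely, Theorem~\ref{thm1} only yields vertex-expansion $\frac{\e}{16\tau}$ on an $n'\ge\frac{5n}{6}$-vertex graph $\G'$, and Theorem~\ref{K} applied with $k=\lfloor n'/2\rfloor$ produces a cycle of length roughly $\ell+1$ where $\ell$ is the guaranteed external neighbourhood size of sets of size about $k/2$, i.e.\ $\ell\approx\frac{\e}{16\tau}\cdot\frac{n'}{4}=\frac{\e n'}{64\tau}\ge\frac{5\e n}{384\tau}\approx\frac{\e n}{77\tau}$, falling short of $\frac{\e n}{40\tau}$ by about a factor of two. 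The $\frac{5\e n}{192\tau}$ you quote is the length of the DFS path; Krivelevich's argument does not close that path into a cycle of comparable length, and this is precisely the step you flagged as delicate. The paper avoids the loss entirely by invoking the sharper internal statement Theorem~\ref{connected}, which is proved for the connected big component $\G^*$ with $n^*\ge\frac{29}{30}n$ and gives the stronger expansion constant $\frac{\e D}{8\tau}$ for sets of size in $[4\de n^*,\frac{1}{2}n^*]$; then $\ell\ge\frac{\e n^*}{32\tau}\ge\frac{29\e n}{960\tau}>\frac{\e n}{40\tau}$, with room to spare.

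A second, independent issue is your assertion that the proof of Theorem~\ref{thm1} is constructive, ``so $V'$ is produced efficiently.'' Inspecting that proof, $V'$ is obtained from a \emph{maximal} low-expansion set $Y$ inside $\G^*$, and finding such a set is a sparsest-cut-type problem that is not obviously polynomial-time. The paper never computes $V'$ at all: it finds the big component $\G^*$ by depth-first search, notes that the hypotheses of Theorem~\ref{K} hold on $\G^*$ as an existential consequence of Theorem~\ref{connected} (the algorithm need not certify this), and runs the polynomial algorithm of Theorem~\ref{K} directly on $\G^*$. Your preliminary observations---the harmless reduction to $\tau\le n$, $D\ge2$, and the brute-force treatment of small $n$---are fine, but to close the two gaps above you should bypass $\G'$ altogether and argue on $\G^*$ using the expansion bound that holds there for medium-sized sets.
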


Our result improves Theorem~\ref{P2} in three aspects. First, we require only a positive fraction of vertices to be well-mixing. Second, avoiding probabilistic arguments allows us to have a deterministic algorithm rather than a random one. More importantly, we lift the lower bound condition on the degree $D$. Theorem~\ref{P2} does not apply to constant degree sparse graphs due to the condition on $D$ as even for expanders the mixing time is at least $\tau=\Omega(\log n)$.

\subsection{Positive fraction of well-mixing vertices to almost all}
Our second main result proves that if a positive fraction of vertices are well-mixing, then so are most of the vertices.

\begin{theorem} \label{thm2}
For every $0<\e<1/4$ and a positive integer $M$, there exists $\de_0 = \de_0(\e,M)$ such that the following holds for all $0 < \de < \de_0$. If $\e n$ vertices $v$ of a $D$-regular graph $\G$ on $[n]$ satisfy that $\|Q^{\tau}_v-U\| < \de$, then at least $(1-(\de/\e)^{1/4^M})n$ vertices $v$ of $\G$ satisfy that $\|Q^{(M+1) \tau}_v-U\| < 2e^{-\e M/2} + 6\de$.
\end{theorem}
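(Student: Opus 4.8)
The plan is to fix, once and for all, a set $W\subseteq[n]$ of $\lceil\e n\rceil$ vertices with $\|Q^\tau_w-U\|<\de$ for all $w\in W$ (such a set exists by hypothesis), write $\bar W=[n]\setminus W$, let $P$ denote the transition matrix of $\cW$, and introduce, for $k\ge0$ and $v\in[n]$,
\[
q_k(v)=\bP\bigl[\,X_\tau\notin W,\ X_{2\tau}\notin W,\ \dots,\ X_{k\tau}\notin W\ \big|\ X_0=v\,\bigr],
\]
so that $q_0\equiv1$ and, by the Markov property, $q_k(v)=\sum_{u\notin W}Q^\tau_v(u)\,q_{k-1}(u)$. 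The theorem will follow from two claims: \emph{(i)} $\|Q^{(M+1)\tau}_v-U\|<\de+q_M(v)$ for every $v$; and \emph{(ii)} $q_M(v)\le 2e^{-\e M/2}$ for all but at most $(\de/\e)^{1/4^M}n$ vertices $v$, once $\de<\de_0(\e,M)$. Indeed, since $\de+2e^{-\e M/2}<2e^{-\e M/2}+6\de$, \emph{(i)} and \emph{(ii)} together give the statement.

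For \emph{(i)} I would condition on the first return time to $W$: let $T=\min\{i\in\{1,\dots,M\}:X_{i\tau}\in W\}$, with $T=\infty$ if there is no such $i$, so that $\bP[T=\infty\mid X_0=v]=q_M(v)$. On the event $T=k$ the walk sits at some $w\in W$ at time $k\tau$ and then runs for $(M+1-k)\tau\ge\tau$ further steps, so the conditional law of $X_{(M+1)\tau}$ is a mixture of the distributions $Q^\tau_w P^{(M-k)\tau}$ over $w\in W$; since $U$ is stationary and total variation distance is non-increasing under $P$, each such distribution, and hence the mixture, lies within $\de$ of $U$. Writing $Q^{(M+1)\tau}_v$ as the corresponding convex combination over the value of $T$, using $\bP[T=\infty]=q_M(v)$, and applying the triangle inequality yields $\|Q^{(M+1)\tau}_v-U\|<(1-q_M(v))\de+q_M(v)\le\de+q_M(v)$.

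The key input to \emph{(ii)} is a reversibility estimate that exploits that the vertices of $W$ are genuinely well-mixed rather than merely numerous. Since $\G$ is $D$-regular, $P^\tau$ is symmetric, so for every $A\subseteq[n]$ we have $\sum_v Q^\tau_v(A)=|A|$ and $\sum_{v\in A}Q^\tau_v(W)=\sum_{w\in W}Q^\tau_w(A)$. Applying the second identity with $A=B_1:=\{v:Q^\tau_v(W)<\e/2\}$, using $Q^\tau_w(B_1)\ge|B_1|/n-\de$ for $w\in W$ and $\sum_{v\in B_1}Q^\tau_v(W)<\tfrac\e2|B_1|$ by the definition of $B_1$, gives $\e n\bigl(|B_1|/n-\de\bigr)<\tfrac\e2|B_1|$, whence $|B_1|<2\de n$ (the case $|B_1|<\de n$ being trivial). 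Thus, outside a set of at most $2\de n$ vertices, a single $\tau$-step already lands in $W$ with probability at least $\e/2$; equivalently $Q^\tau_v(\bar W)\le1-\e/2$. This is the quantitative substitute for ``$\G$ is an expander'' that is not available a priori, and it is what ultimately produces the exponential gain.

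Finally I would iterate block by block. Put $\rho_1:=1-\tfrac\e2$, $\delta_1:=2\de$, and for $k\ge2$ set $\rho_k:=(1-\tfrac\e2)\rho_{k-1}+\delta_{k-1}^{1/2}$, $\delta_k:=2\de+\delta_{k-1}^{1/2}$, and $B_k:=\{v:q_k(v)>\rho_k\}$; note $B_1$ above is exactly $\{v:q_1(v)>\rho_1\}$ because $q_1(v)=Q^\tau_v(\bar W)$. One shows $|B_k|\le\delta_k n$ by induction on $k$: from $q_k(v)\le Q^\tau_v(B_{k-1})+\rho_{k-1}Q^\tau_v(\bar W)$ and $Q^\tau_v(\bar W)\le1-\tfrac\e2$ for $v\notin B_1$, any $v\in B_k\setminus B_1$ must satisfy $Q^\tau_v(B_{k-1})>\rho_k-(1-\tfrac\e2)\rho_{k-1}=\delta_{k-1}^{1/2}$; since $\sum_v Q^\tau_v(B_{k-1})=|B_{k-1}|\le\delta_{k-1}n$, Markov's inequality bounds the number of such $v$ by $\delta_{k-1}^{1/2}n$, and adding $|B_1|\le2\de n$ gives $|B_k|\le\delta_k n$. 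Solving the recursions yields $\delta_k\le3(2\de)^{1/2^{k-1}}$ and $\rho_k\le(1-\tfrac\e2)^k+\sqrt3\,k\,(2\de)^{1/2^{k-1}}\le e^{-\e k/2}+\sqrt3\,k\,(2\de)^{1/2^{k-1}}$; choosing $\de_0(\e,M)$ small enough that $3(2\de)^{1/2^{M-1}}\le(\de/\e)^{1/4^M}$ (possible since $2^{M-1}<4^M$) and $\sqrt3\,M\,(2\de)^{1/2^{M-1}}\le e^{-\e M/2}$ then gives $|B_M|\le(\de/\e)^{1/4^M}n$ and $q_M(v)\le\rho_M\le 2e^{-\e M/2}$ for $v\notin B_M$, which is \emph{(ii)}. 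The step I expect to be most delicate is precisely this iteration: the threshold sequence $\rho_k$ has to be tuned so that it falls at essentially the rate $(1-\e/2)^k$ dictated by the return probability from the paragraph above — this is what produces the $e^{-\e M/2}$ in the conclusion — while retaining just enough slack (the increments $\delta_{k-1}^{1/2}$) to absorb the repeated losses in Markov's inequality, which are responsible for the tower $4^M$ in the size of the exceptional set. Making all the constants land below the stated bounds, rather than merely within a constant factor of them, is where the bookkeeping must be done with care; everything else is either a one-line reversibility computation or a routine conditioning argument.
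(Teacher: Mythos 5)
Your proof is correct, and it takes a genuinely different route from the paper's. Both arguments build a nested family of bad sets and iterate $M$ times, but the iteration mechanism differs. The paper first passes to the auxiliary notion of $(\tau,\e,\frac{2\de}{\e})$-mixing via Proposition~\ref{newnotion}, defines $B_0=\{v:\bP[Q^\tau_v\in A]<\e/2\}$, and then defines each $B_i$ by a \emph{one-step} escape probability into the cumulative union $B^{i-1}=\bigcup_{j<i}B_j$; the final estimate is a chained conditional-probability bound over the events $E_i,F_i$. You instead work directly with the \emph{multi-step} survival probabilities $q_k(v)$, define each $B_k$ by thresholding $q_k$ at a tuned level $\rho_k$, and control $|B_k|$ by a single reversibility identity plus Markov's inequality, solving the two coupled scalar recursions at the end. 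This buys you a somewhat cleaner argument: you bypass the auxiliary mixing notion entirely (getting $|B_1|<2\de n$ directly from the TV-distance definition where the paper gets $|B_0|<6\de n/\e$), and you also avoid the paper's detour through the giant component $\G^*$ in the final step, obtaining $\|Q^{(M+1)\tau}_v-U\|<\de+q_M(v)$ rather than the $6\de$ slack. All the threshold bookkeeping ($\delta_k\le3(2\de)^{1/2^{k-1}}$, $\rho_k\le(1-\e/2)^k+\sqrt3\,k(2\de)^{1/2^{k-1}}$) checks out and lands inside the stated bounds. The one point I would urge you to make explicit is the justification behind ``since $U$ is stationary and total variation distance is non-increasing under $P$'': for a regular graph $P$ is doubly stochastic, so $U$ is stationary even when $\G$ is disconnected, and the TV distance to a stationary distribution is non-increasing. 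This is exactly the subtlety that led the paper to take the longer, more defensive route via $U(\G^*)$, so it deserves a sentence rather than being left implicit.
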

Note that, by taking $M$ large enough compared to $\e$ and taking $\delta$ small enough compared to $\e$ and $1/M$, both terms $(\de/\e)^{1/4^M}$ and $2e^{-\e M/2} + 6\delta$ can be made arbitrarily small. Note also that, using Theorem~\ref{thm1} and~\eqref{eq:conductance}, one can derive that a positive fraction of vertices having mixing time $\tau$ implies almost all vertices having mixing time $O(\tau^2\log n)$. The value of Theorem~\ref{thm2} is that $M$ is independent of $\tau$, and thus we have in fact the optimal $O(\tau)$ mixing time for almost all vertices.

To prove Theorem~\ref{thm2}, the idea is to show that a short random walk from a typical vertex is very likely to visit one of those well-mixing vertices; we can then conclude from the Markov property that a random walk from most of the vertices mixes rapidly. To this end, we need to  control the set, say $B$, of (bad) vertices from which a short random walk, with decent probability, do not land in the set of well-mixing vertices. It turns out that controlling the bad set $B$ is a delicate matter, and we ended up having to define iteratively layers of bad sets (as one has to also be careful with the set of vertices from which the random walk has good chance to land in $B$).

We remark that the number of well-mixing vertices $\e n$ is best possible up to a factor of $O(\log n)$. To see this, consider the following construction. Suppose $G_1, G_2$ are two regular expanders with degree $D/2$. Let $m = m(n) = \omega(\log n)$. Choose a vertex set $U_i \subseteq V(G_i)$ with size $|U_1| = |U_2| = n/m$. Merge $U_1$ and $U_2$ arbitrarily to obtain a graph $G$ with $2n - n/m$ vertices. Note that the maximum degree of $G$ is at most $D$. Add arbitrary edges to $G_1$ and $G_2$ to make $G$ a $D$-regular graph (adding edges does not affect the edge-expansion properties of an expander). Then, it is not hard to see that a random walk starting from any vertex from the merged vertex set mixes well within $O(\log n)$ steps. However, it is also not hard to show that a random walk starting from most vertices outside of the merged part has mixing time at least $\Omega(m)$. Indeed, observe that for a typical vertex $v \in V(G_i) \setminus U_i$, the random walk in $G_i$ starting from $v$ does not reach $U_i$ within $o(m)$ steps with high probability, hence it does not reach any vertices of $V(G_{3-i})$.

\medskip

\noindent\textbf{Organization.} The rest of this paper is organized in the following manner. The next section contains some notations and simple facts which will be useful throughout this paper. We prove Theorems \ref{thm1} and \ref{algorithm} in Section~\ref{sec:cycle} and prove Theorem \ref{thm2} in Section~\ref{sec:LvsS}.

\section{Preliminaries}
We use the following fact, which is implied by some well-known results (see, e.g., \cite{MS59}, \cite{DG03} or \cite{ES82}). 
Let $W_k$ denote the number of $k$-walks in $G$ with starting vertex specified. In other words, a walk $v_1\dots v_{k+1}$ and $v_{k+1}\dots v_1$ are considered to be different unless $v_{i}=v_{k+2-i}$ for all $i\in [k+1]$. 




\begin{cor}\label{path}
For any graph on $n$ vertices with average degree $d$, we have that $W_k \ge nd^k$.
\end{cor}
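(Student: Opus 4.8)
The plan is to recognise the inequality $W_k\ge nd^k$ as the classical Blakley--Roy / Mulholland--Smith inequality applied to the adjacency matrix. Recall that this inequality states that for a symmetric matrix $A$ with nonnegative entries and a nonnegative vector $x$, one has $(x^{\top}x)^{k-1}(x^{\top}A^kx)\ge(x^{\top}Ax)^k$ for every integer $k\ge1$. Taking $A$ to be the adjacency matrix of $G$ and $x=\mathbf 1$, and using $x^{\top}x=n$, $x^{\top}Ax=\sum_v\deg(v)=nd$ and $x^{\top}A^kx=W_k$, this becomes $n^{k-1}W_k\ge(nd)^k$, i.e.\ $W_k\ge nd^k$; equivalently, this is Sidorenko's inequality for the path with $k$ edges. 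So the only real question is how much of it to reprove, and I would cite \cite{MS59,DG03,ES82} for the general statement; still, it is worth spelling out why the two parities behave differently.

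For even $k=2m$ the bound is elementary. Writing $\mathbf v_j:=A^j\mathbf 1$, symmetry of $A$ gives $\langle\mathbf v_i,\mathbf v_j\rangle=\mathbf 1^{\top}A^{i+j}\mathbf 1=W_{i+j}$, so in particular $\|\mathbf v_j\|^2=W_{2j}$, and Cauchy--Schwarz yields $W_{2j}=\langle\mathbf v_{j-1},\mathbf v_{j+1}\rangle\le\|\mathbf v_{j-1}\|\,\|\mathbf v_{j+1}\|=\sqrt{W_{2j-2}W_{2j+2}}$ for every $j\ge 1$. Hence the sequence $(W_{2j})_{j\ge0}$ is log-convex, so its consecutive ratios are non-decreasing and $W_{2m}\ge W_2^{\,m}/W_0^{\,m-1}$. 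Since $W_0=n$ and $W_2=\sum_v\deg(v)^2\ge(\sum_v\deg(v))^2/n=nd^2$ (Cauchy--Schwarz again), we get $W_{2m}\ge (nd^2)^m/n^{m-1}=nd^{2m}$, as required.

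The odd case $k=2m+1$ is where the real work lies, and I do not expect an equally cheap argument. The point is that one cannot merely use nonnegativity of the walk vector: writing $\mathbf w:=A^m\mathbf 1\ge 0$ one wants $\mathbf w^{\top}A\mathbf w=W_{2m+1}\ge nd^{2m+1}$, but for a generic nonnegative $\mathbf w$ this is simply false (e.g.\ for $\mathbf w$ a coordinate vector, $\mathbf w^{\top}A\mathbf w=0$), so the argument must exploit the specific structure $\mathbf w=A^m\mathbf 1$. Spectrally, if $\mu$ denotes the spectral measure attached to $\mathbf 1$ then $W_k/n=\int\lambda^k\,d\mu$, and while $\int\lambda^{2m}\,d\mu\ge\bigl(\int\lambda\,d\mu\bigr)^{2m}$ follows from Jensen because $t\mapsto t^{2m}$ is convex on all of $\bR$, the function $t\mapsto t^{2m+1}$ is not convex, so one genuinely has to use that $\mu$ arises from a nonnegative matrix — which is exactly the content of the Mulholland--Smith type results cited above. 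Equivalently, it suffices to prove the consecutive inequality $W_{k+1}\ge dW_k$, since this iterates at once to $W_k\ge W_1\,d^{k-1}=nd^k$; establishing this consecutive form is the heart of the cited references and is what I would regard as the main obstacle.
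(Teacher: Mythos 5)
Your proposal matches the paper exactly: the paper gives no proof of this corollary and simply defers to \cite{MS59}, \cite{DG03}, and \cite{ES82}, which is precisely what you do for the general case. Your additional Cauchy--Schwarz / log-convexity argument for even $k$ is correct (as is the observation that the single-step inequality $W_{k+1}\ge dW_k$, essentially the Dress--Gutman form, iterates to $W_k\ge nd^k$), but this is extra content not present in the paper, which simply cites the result.
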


The following is also a well-known fact that taking a random step does not make the distribution further from the stationary distribution. See \cite[Exercise 4.2]{LP17} for reference.

\begin{lemma} \label{decreasing}
Let $P$ be the transition matrix of a finite Markov chain $(X_t)_{t \in N_0}$ with stationary distribution $\pi$ and starting distribution $\mu$. The total variation distance of the distribution of $X_t$, i.e. $\mu P^t$, to $\pi$ is non-increasing with $t$, i.e. $\|\mu P^t - \pi\| \ge \|\mu P^{t+1} - \pi\|$ for all $t \in N_0$. In particular, for each vertex $v \in V(G)$, the sequence $\|Q^t_v - \pi\|$ is non-increasing in $t$.
\end{lemma}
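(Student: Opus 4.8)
The plan is to reduce everything to a single observation: stationarity lets us pull the time-$(t+1)$ deviation back through one step of the chain, and a stochastic matrix cannot increase the $\ell_1$-distance between vectors. Concretely, since $\pi P = \pi$, I would write
$$\mu P^{t+1} - \pi \;=\; \mu P^{t+1} - \pi P \;=\; (\mu P^t - \pi)P,$$
so it suffices to prove that $\|\sigma P\| \le \|\sigma\|$ for every real row vector $\sigma$ indexed by $V(G)$, where for such a vector I set $\|\sigma\| = \tfrac12\sum_{j}|\sigma(j)|$. (The vector $\mu P^t-\pi$ has zero coordinate sum, so this quantity agrees with the total variation distance in the statement.)

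For the contraction step I would simply expand the matrix product and apply the triangle inequality together with the fact that $P$ is stochastic, i.e.\ has nonnegative entries with each row summing to $1$:
$$\sum_j \Bigl|\sum_i \sigma(i)P(i,j)\Bigr| \;\le\; \sum_j \sum_i |\sigma(i)|\,P(i,j) \;=\; \sum_i |\sigma(i)| \sum_j P(i,j) \;=\; \sum_i |\sigma(i)|.$$
Dividing by $2$ gives $\|\sigma P\|\le\|\sigma\|$, and combining this with the identity above yields $\|\mu P^{t+1}-\pi\| \le \|\mu P^{t}-\pi\|$ for every $t\in N_0$, which is exactly the claimed monotonicity (iterated over $t$ if one wants the statement for all pairs of times).

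For the ``in particular'' clause, I would specialize $\mu$ to the point mass $\delta_v$ at a vertex $v$, so that $\mu P^t = Q_v^t$, and apply the above with $\pi$ the stationary distribution; this shows the sequence $\|Q_v^t - \pi\|$ is non-increasing in $t$ for each fixed $v$.

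I do not expect any genuine obstacle here. The only point one must not skip is the use of stationarity to rewrite ``one extra step of the walk'' as ``one step of $P$ applied to the current deviation $\mu P^t-\pi$''; without $\pi P=\pi$ the vector $\mu P^{t+1}-\pi$ is not manifestly of the form $(\,\cdot\,)P$. Everything after that is the standard fact that stochastic matrices act as $\ell_1$-contractions, which is a one-line triangle inequality.
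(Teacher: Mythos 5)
Your proof is correct and is the standard argument for this fact. The paper itself does not supply a proof — it cites \cite[Exercise 4.2]{LP17} — and the argument you give (rewrite $\mu P^{t+1}-\pi = (\mu P^t-\pi)P$ using stationarity, then observe that a stochastic matrix is an $\ell_1$-contraction by the triangle inequality) is precisely the intended one-line solution to that exercise. No gaps.
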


We use the following result of Krivelevich \cite{K19}.

\begin{theorem} \label{K}
Let $k > 0$ and $\ell \ge 2$ be integers. Let $G$ be a graph on more than $k$ vertices, satisfying: 
\begin{equation}
|N_G(W)| \ge \ell \;\;\;\; \text{for every} \; W \subset V \;\text{with}\; \frac{k}{2} \le |W| \le k.
\end{equation}
Then $G$ contains a cycle of length at least $\ell+1$. Moreover, there is a polynomial time algorithm to find a cycle of length at least $\ell+1$ in $G$. 
\end{theorem}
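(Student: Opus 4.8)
The plan is to locate the cycle inside a depth-first-search (DFS) forest of $G$, using the standard structural fact that a DFS forest has no cross edges, so that every non-tree edge joins a vertex to one of its ancestors.

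First I would reduce to the case that $G$ is connected on more than $k$ vertices. If every component of $G$ had at most $k$ vertices, then, since $|V(G)|>k$, one could choose a union $W$ of components with $k/2\le |W|\le k$: either some component already has size in this range, or all components are smaller than $k/2$, in which case the partial sums of their sizes cannot leap over an interval of length $k/2$. Such a $W$ satisfies $N_G(W)=\varnothing$, contradicting $|N_G(W)|\ge\ell\ge 2$. Hence some component $C$ has $|V(C)|>k$, and since $C$ is a full component, $N_C(W)=N_G(W)$ for every $W\subseteq V(C)$, so it is enough to find the cycle in $C$.

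Next I would run DFS on $C$ to get a tree $\mathcal{T}$ rooted at $r$ and, writing $T_x$ for the subtree rooted at $x$, descend from $r$ through children whose subtrees still have more than $k$ vertices, reaching a vertex $u$ with $|T_u|>k$ but $|T_{u'}|\le k$ for every child $u'$ of $u$. The children's subtrees partition $T_u\setminus\{u\}$, each has size at most $k$, and their total is at least $k$, so a greedy union of some of them produces a set $W$ with $k/2\le|W|\le k$. Since DFS has no cross edges, every neighbour of $W$ lying outside $W$ is a proper ancestor of $u$ or is $u$ itself; hence $N_G(W)$ is contained in the root-to-$u$ path $R=r_0r_1\cdots r_D$ (with $r_D=u$ and $D=\mathrm{depth}(u)$), so $|N_G(W)|\le D+1$. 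The hypothesis then forces $D+1\ge\ell$, and since $N_G(W)\subseteq R$ with $|N_G(W)|\ge\ell$, some $r_i\in N_G(W)$ has $D-i\ge\ell-1$. Picking $x\in W$ with $xr_i\in E(G)$ and following $r_i,r_{i+1},\dots,r_D=u$, then descending into the child-subtree of $u$ that contains $x$, reaching $x$, and closing along the edge $xr_i$, produces a cycle: its vertex set is the disjoint union of a segment of $R$ and a piece of a single child-subtree, so it is a genuine cycle, and counting its edges gives length at least $(D-i)+2\ge\ell+1$.

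Every step above — running DFS, locating $u$, assembling $W$, finding a suitable $r_i$, and reading off the cycle — is clearly polynomial-time, which also gives the algorithmic part of the statement. The only points requiring care are the two greedy selections (one must check that, because each piece added has size strictly below $k/2$ while the relevant totals are at least $k/2$, the target interval $[k/2,k]$ is genuinely hit) and the consistent use of the external-neighbourhood convention for $N_G$, which is exactly what makes $|N_G(W)|\le|R|$ hold; I expect these bookkeeping details, rather than any conceptual difficulty, to be the main obstacle. Small values of $k$ need no separate argument — for instance, when $k=1$ the whole argument degenerates to the familiar fact that minimum degree at least $\ell$ forces a cycle of length at least $\ell+1$.
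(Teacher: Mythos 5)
Your proof is correct, and it follows the same DFS-based route that the paper points to: the paper does not prove Theorem~\ref{K} itself but cites Krivelevich~\cite{K19}, remarking only that the cited proof uses Depth First Search and elementary polynomial-time graph operations, which is precisely the structure of your argument (no cross edges in a DFS forest, a carefully assembled union $W$ of child subtrees with $k/2\le|W|\le k$, and the observation that $N_G(W)$ lies on a single root-to-$u$ tree path, which is then closed into a long cycle by a back edge). The greedy-selection and connectedness reductions you flag as the delicate points are handled correctly, so the proposal is a sound self-contained version of the proof the paper references.
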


We comment here that the `moreover' part of this Theorem is not explicitly written in \cite{K19}. However, it simply follows from the proof of the original theorem in \cite{K19}. To prove Theorem~\ref{K}, Krivelevich uses the Depth First Search algorithm and some elementary graph operations which can be performed in polynomial time.

For simplicity, we abuse the notation to use $Q_v^k$ also for the random variable recording the endpoint of a random walk of length $k$ starting from a vertex $v$. We often use the following simple fact about random walks which follows from the reversibility property of the underlying markov chain. 

\begin{prop} \label{walknumber}
Let $G$ be a $D$-regular graph. Then for every pair of subsets $A, B \subseteq V(G)$, the following holds for all positive integers $k$. $$\sum_{v \in A} \bP[Q_v^k \in B] = \sum_{v \in B} \bP[Q_v^k \in A].$$
\end{prop}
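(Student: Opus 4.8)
The plan is to reduce the identity to the symmetry of the $k$-step transition probabilities of the walk $\cW$ on a $D$-regular graph. Write $P$ for the transition matrix of $\cW$, so $P(u,w)=\frac1D$ if $uw\in E(G)$ and $0$ otherwise, and $\bP[Q_v^k=w]=P^k(v,w)$. Expanding each side of the claimed equality over singletons, it suffices to prove that $P^k(v,w)=P^k(w,v)$ for every pair $v,w\in V(G)$ and every $k\ge 1$; summing this over $v\in A$ and $w\in B$ then yields the proposition directly, via
\[
\sum_{v \in A} \bP[Q_v^k \in B] = \sum_{v \in A}\sum_{w \in B} P^k(v,w) = \sum_{w \in B}\sum_{v \in A} P^k(w,v) = \sum_{w \in B} \bP[Q_w^k \in A].
\]

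To establish the symmetry $P^k(v,w)=P^k(w,v)$, I would first note that $P^k(v,w)=D^{-k}\,W_k(v,w)$, where $W_k(v,w)$ is the number of walks of length $k$ in $G$ from $v$ to $w$: at each of the $k$ steps the walk $\cW$ chooses uniformly among exactly $D$ neighbours, independently of history, so every fixed length-$k$ walk out of $v$ is realised with probability $D^{-k}$. Reversing a walk $v=u_0u_1\cdots u_k=w$ to $w=u_k\cdots u_0=v$ is a bijection between length-$k$ walks from $v$ to $w$ and length-$k$ walks from $w$ to $v$ (here we use that $G$ is undirected), so $W_k(v,w)=W_k(w,v)$ and hence $P^k(v,w)=P^k(w,v)$. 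Alternatively, one can invoke reversibility: since $G$ is $D$-regular, the uniform distribution $U$ is stationary for $\cW$, and the detailed-balance relation $U(v)P(v,w)=U(w)P(w,v)$ holds trivially because $P$ is a symmetric matrix; reversibility passes to powers, giving $U(v)P^k(v,w)=U(w)P^k(w,v)$, and cancelling the constant $U(v)=U(w)=1/n$ gives the same conclusion.

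There is no genuine obstacle in this argument; the only points deserving (minor) care are verifying that the combinatorial count $\bP[Q_v^k=w]=D^{-k}W_k(v,w)$ is set up correctly for the exact walk model $\cW$ defined in the paper, and that walk-reversal is an honest bijection on an undirected simple graph. Both are immediate, so the proof is short.
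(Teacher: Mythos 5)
Your proof is correct and takes essentially the same approach as the paper: both arguments rest on reversing length-$k$ walks, which the paper phrases directly as double-counting walks between $A$ and $B$ (via $\sum_{v \in A} D^k \cdot\bP[Q^k_v \in B]=\sum_{v \in B} D^k\cdot \bP[Q^k_v \in A]$) and which you phrase as pointwise symmetry $P^k(v,w)=P^k(w,v)$ before summing. The only difference is presentational; the underlying bijection is identical.
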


\begin{proof}
The number of paths of length $k$ starting from $A$ and ending at $B$ can be counted in two ways as follows. $$\sum_{v \in A} D^k \cdot\bP[Q^k_v \in B]=\sum_{v \in B} D^k\cdot \bP[Q^k_v \in A].$$ Thus, the proposition follows from the above equality.
\end{proof}

We find the following notion convenient to work with. 
\begin{defn}
	A vertex $v$ is \emph{$(\tau,\e,\de)$-mixing} if for all but at most $\de n$ vertices $u \in V(G)$, $$\frac{1 - \e}{n} \le \bP[Q_v^{\tau} = u] \le \frac{1 + \e}{n}.$$
\end{defn}

The following fact will be handy in proving Theorem \ref{thm2}. 

\begin{prop} \label{newnotion}
For every vertex $v$ in a graph $\G$, the following holds. If $\|Q^{\tau}_v-U\| \le \de$, then $v$ is $(\tau,\e,\frac{2\de}{\e})$-mixing.
\end{prop}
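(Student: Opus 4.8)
The plan is to translate the total variation bound into an $\ell_1$ bound and then apply a Markov-type counting argument. Write $P \coloneqq Q_v^{\tau}$ for the distribution of the endpoint of the length-$\tau$ walk from $v$. By the second equality in the definition of total variation distance, the hypothesis $\|Q_v^{\tau} - U\| \le \de$ is exactly the statement $\tfrac12 \sum_{u \in [n]} \bigl| P(u) - \tfrac1n \bigr| \le \de$, i.e.\ $\sum_{u \in [n]} \bigl| P(u) - \tfrac1n \bigr| \le 2\de$.

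Next I would introduce the ``bad set'' $B \coloneqq \{\, u \in [n] : \bigl| P(u) - \tfrac1n \bigr| > \tfrac{\e}{n} \,\}$, which is precisely the complement of the set of $u$ witnessing the $(\tau,\e,\cdot)$-mixing condition at $v$. Each $u \in B$ contributes more than $\tfrac{\e}{n}$ to the sum $\sum_{u}\bigl| P(u) - \tfrac1n \bigr|$, so combining with the previous paragraph gives $\tfrac{\e}{n}\,|B| < \sum_{u \in B} \bigl| P(u) - \tfrac1n \bigr| \le \sum_{u \in [n]} \bigl| P(u) - \tfrac1n \bigr| \le 2\de$, whence $|B| < \tfrac{2\de}{\e}\, n$. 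Therefore all but fewer than $\tfrac{2\de}{\e}\,n$ vertices $u$ satisfy $\tfrac{1-\e}{n} \le P(u) \le \tfrac{1+\e}{n}$, which is exactly the assertion that $v$ is $(\tau,\e,\tfrac{2\de}{\e})$-mixing.

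There is no real obstacle here; the only thing to be mildly careful about is the bookkeeping between the factor $\tfrac12$ in the $\ell_1$ form of total variation distance and the $\tfrac1n$ normalization inside the definition of $(\tau,\e,\de)$-mixing, which is what produces the constant $2$ in $\tfrac{2\de}{\e}$. (Strictness versus non-strictness of the inequalities is immaterial since the conclusion only asks for ``at most $\tfrac{2\de}{\e}\,n$'' bad vertices.)
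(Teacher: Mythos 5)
Your argument is correct and is essentially the paper's proof, just phrased directly rather than as a contradiction: the paper assumes $|B| > \frac{2\de}{\e}n$ and derives $\|Q^{\tau}_v - U\| > \de$, while you bound $|B|$ from the $\ell_1$ form of the total variation distance directly. The bookkeeping with the factor of $\tfrac12$ and the $\e/n$ threshold matches the paper's computation exactly.
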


\begin{proof}
For the sake of contradiction, assume that the vertex $v$ is not $(\tau,\e,\frac{2\de}{\e})$-mixing. Then, there are more than $\frac{2\de n}{\e}$ vertices $u \in V(G)$ such that $\left|\bP[Q_v^{\tau} = u] - \frac{1}{n}\right| > \frac{\e}{n}$. Thus, we have that $\|Q^{\tau}_v-U\| > \frac{1}{2} \cdot \frac{2\de n}{\e} \cdot \frac{\e}{n} =  \de$, a contradiction.
\end{proof}

\section{Small well-mixing set and long cycles}\label{sec:cycle}
To prove Theorem~\ref{thm1}, we start by showing that it is enough to prove the following with a connectedness assumption in the underlying graph $\G$ in Theorem~\ref{thm1}.

\begin{theorem} \label{connected}
Let $\e$ and $\de$ be real numbers satisfying $0 < \e < \frac{2}{5}$ and $0 < \de \le \frac{1}{20}$. Suppose that $\G$ is a connected $D$-regular $n$-vertex graph such that $\|Q^{\tau}_v-U\| < 2\de$ for at least $\e n$ vertices $v \in [n]$. Then, every vertex set $X \subseteq V(\G)$ of size in $[4\de n, \frac{1}{2}n]$ satisfies $e(X, V(\G) \setminus X) \ge \frac{\e D}{8\tau} |X|$. 
\end{theorem}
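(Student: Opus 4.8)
The plan is to double-count the length-$\tau$ walks that cross the cut $(X, V(\G)\setminus X)$. Write $\xc := V(\G)\setminus X$, let $W$ be the set of vertices $v$ with $\|Q^{\tau}_v-U\| < 2\de$ (so $|W|\ge \e n$), and put $c := e(X,\xc)$; the goal is to show $c \ge \frac{\e D}{8\tau}|X|$. On one hand, the well-mixing vertices lying in $X$ send many length-$\tau$ walks across to $\xc$, and those lying in $\xc$ send many into $X$; on the other hand, only few length-$\tau$ walks can cross a cut of size $c$. Comparing the two estimates will give the bound.

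For the \textbf{upper bound}, I would show that the number of walks $v_0v_1\cdots v_\tau$ with $v_0\in X$ and $v_\tau\in\xc$ is at most $\tau c D^{\tau-1}$ (and likewise with $X$ and $\xc$ swapped). Such a walk must traverse a cut edge; let $i\in[\tau]$ be its first crossing, so that $v_{i-1}v_i$ is one of the $c$ cut edges. For a fixed cut edge and fixed position $i$, the number of length-$\tau$ walks traversing that edge between times $i-1$ and $i$ is $D^{i-1}\cdot D^{\tau-i}=D^{\tau-1}$: the initial piece is a walk of length $i-1$ ending at the prescribed ($X$-side) endpoint, the final piece a walk of length $\tau-i$ starting at the prescribed ($\xc$-side) endpoint, and in a $D$-regular graph there are exactly $D^{m}$ walks of length $m$ ending at (equivalently, by reversibility, starting from) any given vertex. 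Assigning to each crossing walk its first-crossing edge and position, the count is at most $\tau c D^{\tau-1}$. This ``first-crossing'' decomposition together with the reversibility count is the only genuinely new ingredient, and it is where I expect the (small amount of) care to be needed.

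For the \textbf{lower bound}, I would use that in a $D$-regular graph the number of length-$\tau$ walks from $v$ ending in a set $B$ equals $D^{\tau}\bP[Q^{\tau}_v\in B]$, and that $v\in W$ forces $\bP[Q^{\tau}_v\in B] > |B|/n-2\de$ for every $B$ (directly from $\|Q^\tau_v - U\| < 2\de$). Applying this with $B=\xc$ to each vertex of $X\cap W$, and with $B=X$ to each vertex of $\xc\cap W$, and comparing with the upper bound,
\[
c \ \ge\ \frac{|X\cap W|\,D}{\tau}\Bigl(\tfrac{|\xc|}{n}-2\de\Bigr)
\qquad\text{and}\qquad
c \ \ge\ \frac{|\xc\cap W|\,D}{\tau}\Bigl(\tfrac{|X|}{n}-2\de\Bigr).
\]

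Finally I would finish with a short case analysis. Since $|X\cap W|+|\xc\cap W| = |W|\ge \e n$, at least one of these two sets has size $\ge \e n/2$. If $|X\cap W|\ge \e n/2$: as $|X|\le n/2$ and $\de\le \tfrac1{20}$ we have $|\xc|/n-2\de\ge \tfrac12-\tfrac1{10}=\tfrac25$, so the first inequality gives $c\ge \frac{\e nD}{5\tau}\ge \frac{\e D}{8\tau}|X|$. If $|\xc\cap W|\ge \e n/2$: as $|X|\ge 4\de n$ we have $|X|/n-2\de\ge |X|/(2n)$, so the second inequality gives $c\ge \frac{\e D}{4\tau}|X|\ge \frac{\e D}{8\tau}|X|$. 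In both cases $e(X,\xc)\ge \frac{\e D}{8\tau}|X|$, as wanted. (I note that connectedness of $\G$ is not actually used in this argument; it presumably enters only in the reduction of Theorem~\ref{thm1} to Theorem~\ref{connected}.)
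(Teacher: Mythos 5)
Your proof is correct, and it takes a genuinely different route from the paper's on the key step. Both arguments rest on the same tension — for a low-conductance cut, few $\tau$-walks can cross it, while the hypothesis that a good vertex $v$ on one side has $\|Q^\tau_v-U\|<2\de$ forces many walks from $v$ to land on the other side — but you and the paper establish the ``few crossings'' half differently. The paper passes to the induced subgraph $\G[X]$, observes its average degree is at least $\bigl(1-\tfrac{\e}{8\tau}\bigr)D$, and invokes the walk-counting inequality $W_k\ge nd^k$ (their Corollary~\ref{path}, the Mulholland--Smith/Blakley--Roy bound) to show that at least a $\bigl(1-\tfrac{\e}{8\tau}\bigr)^\tau>1-\tfrac{\e}{4}$ fraction of $\tau$-walks from $X$ stay inside $X$. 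You instead give a direct first-crossing decomposition, pairing each crossing walk with the position and the cut edge of its first visit to $\xc$, and counting prefixes and suffixes by the fact that a $D$-regular graph has exactly $D^m$ walks of length $m$ ending (equivalently, starting) at any fixed vertex; this yields the cleaner bound of $\tau c D^{\tau-1}$ crossing walks without needing any external lemma or the inequality $(1-x)^\tau\ge 1-\tau x$. The case analysis on whether $W$ is concentrated on the $X$- or $\xc$-side matches the paper's (which uses its Proposition~\ref{walknumber} to handle the second case, whereas your walk-counting is symmetric by construction and makes this automatic), and your numerical estimates using $|X|\le n/2$, $|X|\ge4\de n$ and $\de\le\tfrac1{20}$ all check out. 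Your observation that connectedness is never actually used in the argument is also correct; the paper's proof likewise makes no essential use of it, and the hypothesis appears to be included only because Theorem~\ref{connected} is applied inside Theorem~\ref{thm1} to a large connected component. The trade-off is minor: the paper's route through Corollary~\ref{path} is slightly shorter given that lemma is already stated, while yours is self-contained and elementary.
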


It is immediate to see that Theorem~\ref{algorithm} is implied by Theorems~\ref{connected} and \ref{K}. Indeed, the existence of a vertex $v$ with $\| Q_v^{\tau}-U\|<\frac{1}{20}$ implies that the graph $\G$ given in Theorem~\ref{algorithm} contains a component of size at least $\frac{19}{20}n$, which can be found in polynomial time by e.g. performing a Depth First Search. We can then apply Theorems~\ref{connected} and \ref{K} on this large component to find the desired long cycle.

Let us now show how to prove Theorem \ref{thm1} by using Theorem \ref{connected}. For the convenience of our analysis, to distinguish the underlying graphs, we write $Q^t_v(G)$ for the distribution of the random walk $Q^t_v$ in graph $G$. We write $U(G)$ to denote the uniform distribution on the vertex set of $G$. 

\begingroup
\def\thetheorem{\ref{thm1}}
\begin{theorem} 
Let $\e$ and $\de$ be real numbers satisfying $0 < \e < \frac{2}{5}$ and $0 < \de \le \frac{1}{30}$. Suppose that $\G$ is a $D$-regular $n$-vertex graph such that $\|Q^{\tau}_v-U\| < \de$ for at least $\e n$ vertices $v \in [n]$. Then, there exists a set $V'$ of at most $5\de n$ vertices such that the induced graph $\G'$ on $V(\G) \setminus V'$ satisfies the following where $n'=|\G'|$. Every vertex set $X \subseteq V(\G')$ of size at most $\frac{1}{2}n'$ satisfies $e(X, V(\G') \setminus X) \ge \frac{\e  D}{16\tau} |X|$. 
\end{theorem}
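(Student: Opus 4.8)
The plan is to reduce Theorem~\ref{thm1} to the connected case, Theorem~\ref{connected}, by passing to the large component of $\G$, and then to delete a few additional vertices inside that component to repair the edge-expansion of the small sets that Theorem~\ref{connected} does not control. Throughout, put $\mu := \frac{\e D}{16\tau}$ for the target expansion rate, so that Theorem~\ref{connected} will supply expansion at the better rate $2\mu$ on a suitable size window.

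First I would pin down the large component. Since a random walk never leaves its own component $C$, taking $A = C$ in the definition of total variation distance gives $\|Q^{\tau}_v - U\| \ge 1 - |C|/n$; hence every vertex $v$ with $\|Q^{\tau}_v - U\| < \de$ lies in a component on more than $(1-\de)n$ vertices, and as $\de \le \frac{1}{30} < \frac{1}{2}$ there is at most one such component. So all (at least $\e n$) well-mixing vertices lie in a single component $C_0$, which is connected and $D$-regular, with $n_0 := |C_0| > (1-\de)n$. For $v \in C_0$ the distribution $Q^{\tau}_v$ is the same computed in $\G$ or in $C_0$, while taking $A = V(\G)\setminus V(C_0)$ shows $\|U(\G) - U(C_0)\| = 1 - n_0/n < \de$; so by the triangle inequality every well-mixing vertex of $C_0$ satisfies $\|Q^{\tau}_v(C_0) - U(C_0)\| < 2\de$, and there are at least $\e n \ge \e n_0$ of them. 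Since $0 < \e < \frac{2}{5}$ and $0 < \de \le \frac{1}{30} \le \frac{1}{20}$, Theorem~\ref{connected} applied to $C_0$ yields: every $X \subseteq V(C_0)$ with $4\de n_0 \le |X| \le \frac{1}{2} n_0$ satisfies $e_{C_0}(X, V(C_0)\setminus X) \ge 2\mu|X|$.

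Next I would repair the small sets inside $C_0$. Starting from $H = C_0$, repeatedly delete from $H$ any nonempty vertex set $X$ with $|X| < 4\de n_0$ and $e_H(X, V(H)\setminus X) < \mu|X|$, stopping when none remains; let $R$ be the union of all deleted sets and $H$ the final graph. A telescoping argument shows each deleted piece $X_i$ sends fewer than $\mu|X_i|$ edges to the final exterior $V(C_0)\setminus R$, so $e_{C_0}(R, V(C_0)\setminus R) < \mu|R|$. Consequently $|R| < 4\de n_0$: otherwise, at the step when $|R|$ first reaches $4\de n_0$ it has size less than $8\de n_0 < \frac{1}{2} n_0$ (each piece being smaller than $4\de n_0$), so $|R|$ lies in $[4\de n_0, \frac{1}{2} n_0]$ and Theorem~\ref{connected} forces $e_{C_0}(R, V(C_0)\setminus R) \ge 2\mu|R|$, a contradiction. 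Now set $V' := (V(\G)\setminus V(C_0)) \cup R$; then $|V'| < \de n + 4\de n_0 \le 5\de n$, and $\G'$, the induced subgraph of $\G$ on $V(\G)\setminus V'$, is exactly $H$, with $n' := |V(H)|$. To finish, take any $X \subseteq V(\G')$ with $|X| \le \frac{1}{2} n'$. If $|X| < 4\de n_0$, then the process having terminated forces $e_{\G'}(X, V(\G')\setminus X) \ge \mu|X|$. If $4\de n_0 \le |X| \le \frac{1}{2} n' \le \frac{1}{2} n_0$, then Theorem~\ref{connected} gives $e_{C_0}(X, V(C_0)\setminus X) \ge 2\mu|X|$, and deleting $R$ destroys at most $e_{C_0}(X, R) \le e_{C_0}(R, V(C_0)\setminus R) < \mu|R| \le \mu|X|$ of these edges (using $X \subseteq V(C_0)\setminus R$ and $|R| < 4\de n_0 \le |X|$), so $e_{\G'}(X, V(\G')\setminus X) > \mu|X|$. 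In both cases $e_{\G'}(X, V(\G')\setminus X) \ge \frac{\e D}{16\tau}|X|$, as required.

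The main obstacle I anticipate is exactly the medium-sized case just above: a priori, deleting the repair set $R$ could annihilate the expansion of a set $X$ all of whose outgoing edges point into $R$. The point that rescues the argument is that $R$ is itself a small-expansion set, so the number of edges from $X$ into $R$ is bounded by $e_{C_0}(R, V(C_0)\setminus R) < \mu|R|$, which is at most $\mu|X|$ precisely because $R$ is small; the factor-$2$ gap between the $2\mu|X|$ provided by Theorem~\ref{connected} and the target $\mu|X|$ is exactly wide enough to absorb this loss. The same observation, packaged as ``no bad set can have size in the window $[4\de n_0, \frac{1}{2} n_0]$'', is also what bounds $|R|$ and so keeps the total number of deleted vertices within the budget $5\de n$.
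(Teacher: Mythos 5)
Your proof is correct. The reduction to the connected case (isolating the unique large component $C_0$ and transferring the mixing hypothesis via the triangle inequality) matches the paper's proof exactly. Where you diverge is in how the small, poorly-expanding sets inside the component are repaired. The paper takes a single set $Y$ of maximum size among all sets with $|Y|\le \tfrac12 n^*$ and $e(Y,V(\G^*)\setminus Y)<\tfrac{\e D}{16\tau}|Y|$, notes $|Y|<4\de n^*$ by the medium-size expansion, deletes it, and then rules out any remaining bad set $X$ by playing the complement $Z=V(\G')\setminus(X\cup Y)$ against the medium-size window (this is where the factor-$2$ gap is spent: $|X\cup Y|\le \tfrac23 n^*$ forces $|Z|\ge\tfrac13 n^*$, hence $\tfrac{\e D}{16\tau}|X\cup Y|\le\tfrac{\e D}{8\tau}|Z|$). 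You instead run an iterative peeling process, deleting small bad sets one at a time, use a telescoping bound $e_{C_0}(R,V(C_0)\setminus R)<\mu|R|$ to keep the total deleted set $R$ below $4\de n_0$, and then in the verification split into cases by $|X|$, spending the factor-$2$ gap to absorb the loss $e_{C_0}(X,R)<\mu|R|\le\mu|X|$ in the medium case. Both arguments are sound and use the same factor-$2$ slack in the same place conceptually; the paper's one-shot maximal-set move is slightly slicker, while your iterative peeling is a more routine and perhaps more transparent mechanism, and it has the small advantage that the verification in the medium case is a direct edge count rather than an argument via the complement.
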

\addtocounter{theorem}{-1}
\endgroup
\begin{proof}
Suppose that $\G$ is an arbitrary $D$-regular graph on $[n]$ with the property that $\|Q^{\tau}_v (\G) - U(\G)\|<\de$ for at least $\e n$ vertices $v \in [n]$. Observe that each of these $\e n$ vertices must lie in a connected component $\G^*$ with $n^*\geq (1-\de)n$ vertices. For each of these vertices $v$, $Q^{\tau}_v (\G)=Q^{\tau}_v (\G^*)$. Thus,
\begin{equation}\label{eq:heart}
	\|Q^{\tau}_v (\G^*) - U(\G^*)\| < \|Q^{\tau}_v (\G) - U(\G)\| + \|U(\G) - U(\G^*)\| < \de + \de = 2\de.
\end{equation}
Therefore, one can apply Theorem~\ref{connected} on $\G^*$ to conclude the following.
\begin{equation}\label{eq: 3}
	\begin{minipage}{0.9\textwidth}
	Any vertex set $X\subseteq V(\G^*)$ with size in $[4\delta n^* , \frac{1}{2} n^*]$ satisfies $e(X,V(\G^*)\setminus X)\geq \frac{\e  D}{8\tau}|X|$.
\end{minipage}
\end{equation}
Let $Y\subseteq V(\G^*)$ be a maximal vertex set satisfying $|Y|\le \frac{1}{2}n^*$ and $e(Y,V(\G^*)\setminus Y) < \frac{\e  D}{16\tau}|Y|$. If such a set does not exist, let $Y=\varnothing $. Then we have $|Y|< 4\delta n^*$ by \eqref{eq: 3}. Let $\G'$ be the graph obtained from $\G^*$ by deleting vertices in $Y$ and let $n'=|\G'|$.

Suppose that $\G'$ contains a nonempty vertex set $X$ with $e(X,V(\G')\setminus X) < \frac{\e  D}{16\tau}|X|$ and $|X|\le  \frac{1}{2} n'$. Then, by letting $Z=V(\G')\setminus (X\cup Y)$, we have 
\begin{align}
e(X\cup Y,Z) &\leq e(Y, V(\G^*)\setminus Y)+ e(X,V(\G')\setminus X)
< \frac{\e  D}{16\tau}|Y| + \frac{\e  D}{16\tau}|X| =\frac{\e D}{16\tau}|X\cup Y|. 
\label{eq:why}
\end{align}
If $|X\cup Y|\leq \frac{1}{2}n^*$, this contradicts the maximality of $Y$, so we have $|X\cup Y| > \frac{1}{2}n^*$. On the other hand, as $|Z| = n^*-|X\cup Y|$, we have 
$$|X\cup Y| \leq \frac{1}{2}(n^*-|Y|)+ |Y| \leq \left(\frac{1}{2}+2\delta\right)n^*\leq \frac{2}{3} n^* \enspace \text{and} \enspace \frac{1}{3}n^*\leq |Z|  < \frac{1}{2}n^*,$$ 
and 
$$ \frac{\e D}{16\tau}|X\cup Y|  \leq \frac{\e D}{8\tau }|Z|.$$ 
As $4\delta n^*< \frac{1}{3}n^* \leq |Z|< \frac{1}{2} n^*$, this together with \eqref{eq:why} contradicts \eqref{eq: 3}. Hence, any vertex set $X\subseteq \G'$ of size at most $\frac{1}{2}n'$ satisfies $e(X, V(\G') \setminus X) \ge \frac{\e D}{16\tau} |X|$ in $\G'$ as desired.
\end{proof}

We finally finish by proving Theorem~\ref{connected}.

\begin{proof}[Proof of Theorem~\ref{connected}]
We start with a sketch of the proof of Theorem~\ref{connected}. Call the vertices $v$ good if they satisfy $\|Q_v^\tau - U\| < 2\de$. Suppose there is a set $X$ of size in $[4\de n, \frac{1}{2}n]$ with a small edge-expansion, in particular $e(X, V \setminus X) <  \frac{\e  D}{8\tau} |X|$. We show that there would be a good vertex $v$ such that the random walk starting at $v$ after time $\tau$ lands at the part that contains $v$ with large probability. This would contradict the fact that $v$ is a good vertex.

Consider the subgraph $G$ of $\G$ induced by $X$. By the choice of $X$, the average degree in $G$ is at least $\left(1-\frac{\e}{8\tau}\right)D$.
Applying Corollary~\ref{path} to $G$, we get that the number of $\tau$-walks starting from $X$ and which always stay in $X$, denoted by $W_\tau$, satisfies that $$W_\tau \ge \left(1-\frac{\e }{8\tau}\right)^\tau D^\tau |X|.$$


Now let $p$ denote the probability that the random walk of length $\tau$ from a uniformly chosen random vertex $v$ in $X$ ends up in $X$. As there are $D^\tau |X|$ walk of length $\tau$ from a vertex in $X$, We have the following bound. 

\begin{align} \label{bound:p}
p &\ge  \left(1-\frac{\e}{8\tau}\right)^\tau > 1 - \frac{\e}{4}.
\end{align}

By the assumption that  $\|Q_v^\tau - U\| < 2\de$ for each of the good vertices $v$, we have 
\[\Pr[Q_v^\tau \in A] \ge \frac{|A|}{n} - 2\de\;\;\text{for any}\;\;A \subseteq V(G).\]

Let $I$ denote the set of good vertices of $\G$. It is clear that either $X$ or $V \setminus X$ contains half of the vertices of $I$. First assume that $|I \cap X| \ge \frac{\e n}{2}$. Then we obtain the following: 
\begin{align*}
1 - p &= \frac{1}{|X|} \sum_{v \in X} \bP[Q_v^\tau \in V \setminus X] \ge \frac{1}{|X|} \sum_{v \in I \cap X} \bP[Q_v^\tau \in V \setminus X] \\
&\ge \frac{1}{|X|} |I \cap X| \left(\frac{|V \setminus X|}{n} - 2\de\right) \ge \frac{1}{n/2} \cdot \frac{\e n}{2} \cdot \left(\frac{1}{2} - \frac{2}{20}\right) > \frac{\e}{4},
\end{align*}
which contradicts \eqref{bound:p}. Similar contradiction is achieved if $|I \cap (V \setminus X)| \ge \frac{\e n}{2}$ as follows. By Proposition \ref{walknumber}, we have
\begin{align*}
1 - p &= \frac{1}{|X|} \sum_{v \in V \setminus X} \bP[Q_v^\tau \in X] \ge \frac{1}{|X|} \sum_{v \in I \cap (V \setminus X)} \bP[Q_v^\tau \in X] \\
&\ge \frac{1}{|X|} |I \cap (V \setminus X)| \left(\frac{|X|}{n} - 2\de\right) > \frac{\e n/2}{|X|} \cdot \frac{|X|}{2n} = \frac{\e}{4}.
\end{align*}
This completes the proof of Theorem \ref{connected}.
\end{proof}

\section{Large well-mixing set vs small well-mixing set}\label{sec:LvsS}
In this section, we prove Theorem \ref{thm2}.


\begingroup
\def\thetheorem{\ref{thm2}}
\begin{theorem}
For every $0<\e<1/4$ and a positive integer $M$, there exists $\de_0 = \de_0(\e,M)$ such that the following holds for all $0 < \de < \de_0$. If $\e n$ vertices $v$ of a $D$-regular graph $\G$ on $[n]$ satisfy that $\|Q^{\tau}_v-U\| < \de$, then at least $(1-(\de/\e)^{1/4^M})n$ vertices $v$ of $\G$ satisfy that $\|Q^{(M+1) \tau}_v-U\| < 2e^{-\e M/2} + 6\de$.\end{theorem}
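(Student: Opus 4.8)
The plan is to reduce the claim to a hitting‑time estimate for the set $I$ of well‑mixing vertices, and then to control, via reversibility, a carefully chosen \emph{nested} family of ``bad'' sets.

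\medskip
\noindent\textbf{Step 1: reduction to a hitting time.} Set $I:=\{v:\|Q_v^{\tau}-U\|<\de\}$, so $|I|\ge \e n$; WLOG $|I|=\e n$. Let $T$ be the first time the walk hits $I$. Conditioning on $(T,X_T)$ on the event $\{T\le M\tau\}$ and using the strong Markov property, each conditioned walk runs for at least $\tau$ further steps starting from a vertex of $I$, so by Lemma~\ref{decreasing} its endpoint distribution is within $\de$ of $U$. A routine total‑variation manipulation then gives $\|Q_v^{(M+1)\tau}-U\|\le \de+\Pr_v[T>M\tau]$ for every $v$. Since for $v\notin I$ we have $\{T>M\tau\}\subseteq\{Q_v^{j\tau}\notin I\text{ for }j=1,\dots,M\}$ (and $\Pr_v[T>M\tau]=0$ for $v\in I$), writing $\beta_j(v):=\Pr_v[Q_v^{i\tau}\notin I,\ i=1,\dots,j]$ it suffices to show that $\beta_M(v)$ is small for all but $(\de/\e)^{1/4^M}n$ vertices $v$.

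\medskip
\noindent\textbf{Step 2: the first bad layer.} Let $B_1:=\{v:\Pr[Q_v^{\tau}\in I]<\e/2\}$. By Proposition~\ref{walknumber}, $\sum_{v\in B_1}\Pr[Q_v^{\tau}\in I]=\sum_{u\in I}\Pr[Q_u^{\tau}\in B_1]\ge |I|(|B_1|/n-\de)$, the last inequality using $\|Q_u^{\tau}-U\|<\de$ for $u\in I$; comparing with the defining bound of $B_1$ yields $|B_1|<2\de n$, and every $v\notin B_1$ satisfies $\Pr[Q_v^{\tau}\in I]\ge \e/2$. Now fix $\gamma:=e^{-\e M/2}/M$ and define inductively $B^{(0)}:=B_1$ and $B^{(k+1)}:=B^{(k)}\cup\{v:\Pr[Q_v^{\tau}\in B^{(k)}]\ge\gamma\}$. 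As the $\tau$‑step transition matrix of a $D$‑regular graph is doubly stochastic, $\sum_v\Pr[Q_v^{\tau}\in B^{(k)}]=|B^{(k)}|$, so the $(k+1)$‑st layer has size at most $|B^{(k)}|/\gamma$; hence $|B^{(M)}|\le 2\de n\,(1+1/\gamma)^M=2\de n\,(1+Me^{\e M/2})^M$, which is at most $(\de/\e)^{1/4^M}n$ once $\de<\de_0(\e,M)$ is chosen small enough (something like $\de_0=e^{-C(\e)M^2}$ suffices).

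\medskip
\noindent\textbf{Step 3: the layered induction (the crux).} I would prove that $\beta_j(v)\le j\gamma+(1-\e/2)^j$ for all $0\le j\le M$ and all $v\notin B^{(j)}$, by induction on $j$. For the inductive step, write $\beta_j(v)=\sum_{u\notin I}\Pr[Q_v^{\tau}=u]\,\beta_{j-1}(u)$ and split according to whether $u\in B^{(j-1)}$: the contribution of $u\in B^{(j-1)}$ is at most $\Pr[Q_v^{\tau}\in B^{(j-1)}]<\gamma$ since $v\notin B^{(j)}$, while the contribution of $u\notin B^{(j-1)}$ is at most $(1-\e/2)\max_{u\notin B^{(j-1)}}\beta_{j-1}(u)$ since $v\notin B_1\subseteq B^{(j)}$ forces $\Pr[Q_v^{\tau}\in I]\ge\e/2$, and there the inductive hypothesis applies. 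Combining with Step~1, for $v\notin B^{(M)}$ we get $\|Q_v^{(M+1)\tau}-U\|\le \de+M\gamma+(1-\e/2)^M\le \de+2e^{-\e M/2}\le 6\de+2e^{-\e M/2}$, and the number of vertices not in $B^{(M)}$ is at least $(1-(\de/\e)^{1/4^M})n$ by Step~2.

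\medskip
\noindent I expect the main obstacle to be exactly this layered induction: one must keep simultaneous control of the leakage into the current bad layer and of the geometric decay coming from the $\e/2$ chance of landing in $I$, and balance the choice of $\gamma$ so that the accumulated leakage $M\gamma$ stays below the target error while the multiplicative blow‑up $(1+1/\gamma)^M$ of the bad set stays within the allowed $(\de/\e)^{1/4^M}n$. This tension is what forces $\de$ to be taken super‑exponentially small in $M$, and it is the reason the bad set has to be built up in layers rather than in one shot.
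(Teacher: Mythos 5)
Your argument is correct in substance and mirrors the paper's strategy: isolate the set $I$ of well-mixing starting vertices, show that for most $v$ the chunked walk $\{Q_v^{j\tau}\}$ hits $I$ quickly, and transfer mixing through the Markov property. The only notable differences are organizational. The paper builds its bad layers with a geometrically increasing sequence of thresholds $\eta_i = 2\cdot 3^{i+1}(\de/\e)^{2^{-i}}$ and controls the escape probability via a union bound over the events $\{Q_v^{i\tau}\in B^{M-i}\}$ plus a product bound over the events $\{Q_v^{i\tau}\notin A\}$, whereas you fix a single threshold $\gamma = e^{-\e M/2}/M$ and propagate the explicit invariant $\beta_j(v)\le j\gamma+(1-\e/2)^j$ by induction. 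Both are implementations of the same idea (iterate via reversibility/double stochasticity to keep the bad sets small while the walk has a fresh $\e/2$ chance of hitting $I$ in each chunk), and both force $\de_0$ to be super-exponentially small in $M$. Your bound $|B_1|<2\de n$ is in fact tighter than the paper's $|B_0|<6\de n/\e$ because you use the total-variation bound directly rather than going through the $(\tau,\e,2\de/\e)$-mixing reformulation.

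One small inaccuracy worth fixing: in Step~1 you invoke Lemma~\ref{decreasing} to claim $\|Q_u^{(M+1-j)\tau}-U\|\le\de$ for $u\in I$, but that lemma controls the distance to the \emph{stationary} distribution, which is $U$ only when $\G$ is connected. In the disconnected case the stationary distribution from $u$ is uniform on $u$'s component $\G^*$, and one must pass through $U(\G^*)$ via a triangle inequality, as the paper does; since $\|Q_u^\tau-U\|<\de$ forces $|\G^*|\ge(1-\de)n$, this only costs a few extra $\de$'s. Your final claimed bound $\de+2e^{-\e M/2}\le 6\de+2e^{-\e M/2}$ leaves exactly the slack needed to absorb this, so the conclusion is unaffected, but the intermediate claim ``within $\de$ of $U$'' should be stated as ``within $O(\de)$ of $U$'' with the component argument spelled out.
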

\addtocounter{theorem}{-1}
\endgroup
\begin{proof}
Let $\delta< \delta_0 = \frac{1}{10^8}\e \cdot 3^{-M\cdot4^M}$.
Let $A$ be the set of all vertices $v$ of $\G$ satisfying that $\|Q^{\tau}_v-U\| < \de$. By the assumption on $\G$, we have that $|A| \ge \e n$. By Proposition \ref{newnotion}, the vertices of $A$ are $(\tau,\e,\frac{2\de}{\e})$-mixing. 

Collect the vertices $u$ where the random walks from $u$ is unlikely to visit $A$ and let  $B_0$ be the set of such vertices as follows.
$$B_0 = \{v : \bP[Q^\tau_v \in A] < \e/2\}.$$

\begin{claim}\label{claim:B0}
We have that $|B_0| < \frac{6\de n}{\e}$.
\end{claim}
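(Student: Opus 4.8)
The plan is to exploit the reversibility of the walk via Proposition~\ref{walknumber}, applied to the pair of sets $A$ and $B_0$. The point is that $B_0$ is defined by the condition that walks \emph{from} $B_0$ rarely land in $A$, while what we can control directly is the behaviour of walks \emph{from} $A$, since every vertex of $A$ is well-mixing; reversibility is exactly what lets us trade one for the other. Concretely, Proposition~\ref{walknumber} gives the identity
\[
\sum_{v\in B_0}\bP[Q_v^{\tau}\in A]=\sum_{v\in A}\bP[Q_v^{\tau}\in B_0],
\]
and I would estimate the two sides from opposite directions.

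The left-hand side is at most $\tfrac{\e}{2}\,|B_0|$ directly from the definition of $B_0$, since each summand is less than $\e/2$. For the right-hand side, every $v\in A$ satisfies $\|Q_v^{\tau}-U\|<\de$, so the total-variation bound $|P(S)-U(S)|\le\|P-U\|$ with $S=B_0$ yields $\bP[Q_v^{\tau}\in B_0]>\tfrac{|B_0|}{n}-\de$; combined with $|A|\ge\e n$, the right-hand side is at least $\e n\big(\tfrac{|B_0|}{n}-\de\big)=\e|B_0|-\e\de n$. Putting the two bounds together gives $\tfrac{\e}{2}|B_0|\ge \e|B_0|-\e\de n$, hence $|B_0|\le 2\de n$, which is comfortably below $\tfrac{6\de n}{\e}$ because $\e<\tfrac14$.

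There is no real obstacle here; the claim is essentially routine, and the only points needing a moment's care are (i) the degenerate case $B_0=\varnothing$, where the bound is trivial, and (ii) splitting off the case $|B_0|\le\de n$ before replacing $|A|$ by $\e n$ in the lower bound, so that one need not fuss over the sign of $\tfrac{|B_0|}{n}-\de$; in that case $|B_0|\le\de n<\tfrac{6\de n}{\e}$ already. I note that the $(\tau,\e,\tfrac{2\de}{\e})$-mixing property of $A$ recorded just before the claim is not actually needed for this particular estimate — the plain total-variation bound suffices — although one could instead run the same double-counting argument through that property at the cost of a slightly weaker constant, which is presumably why the statement is phrased with the looser bound $\tfrac{6\de n}{\e}$.
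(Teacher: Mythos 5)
Your proof is correct and rests on the same central idea as the paper's: apply the reversibility identity of Proposition~\ref{walknumber} to the pair $(A,B_0)$, then bound $\sum_{v\in B_0}\bP[Q_v^\tau\in A]$ from above via the definition of $B_0$ and $\sum_{v\in A}\bP[Q_v^\tau\in B_0]$ from below via the well-mixing property of $A$. The only divergence is in the lower bound: the paper routes it through the $(\tau,\e,\frac{2\de}{\e})$-mixing notion from Proposition~\ref{newnotion}, getting $\bP[Q_v^\tau\in B_0]\ge\frac{2|B_0|}{3}\cdot\frac{1-\e}{n}$ under the contradictory hypothesis $|B_0|\ge\frac{6\de n}{\e}$, whereas you use the raw total-variation inequality $\bP[Q_v^\tau\in B_0]>\frac{|B_0|}{n}-\de$ directly. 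Your version is simpler and sharper, giving $|B_0|\le 2\de n$ rather than $\frac{6\de n}{\e}$; the paper's looser constant is there because $\frac{6\de n}{\e}=\eta_0 n$ is exactly the base case needed for the inductive Claim~\ref{bm}, so it keeps the constants aligned with the $\eta_i$ bookkeeping that follows. Your caveat about the sign of $\frac{|B_0|}{n}-\de$ (handled by first disposing of $|B_0|\le\de n$) is appropriate and analogous to the paper's implicit use of $|B_0|\ge\frac{6\de n}{\e}>\frac{2\de n}{\e}$ inside the contradiction hypothesis.
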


\begin{poc}
By Proposition \ref{walknumber}, we have the following:
\begin{equation} \label{identity}
\sum_{v \in A} \bP[Q^\tau_v \in B_0]=\sum_{v \in B_0} \bP[Q^\tau_v \in A].
\end{equation}
By the definition of $B_0$, we have the following:
\begin{equation} \label{a}
\bP[Q^\tau_v \in A] < \e/2\;\text{for every}\;v\in B_0.
\end{equation}
Suppose for the sake of contradiction that $|B_0| \ge \frac{6\de n}{\e}$. Then, since the vertices of $A$ are $(\tau,\e,\frac{2\de}{\e})$-mixing, for every $v \in A$, we have that
\begin{equation} \label{b0}
\bP[Q^\tau_v \in B_0] = \sum_{u \in B_0} \bP[Q^\tau_v = u] \ge \left(|B_0| - \frac{2\de n}{\e} \right) \cdot \frac{1-\e}{n} \ge \frac{2|B_0|}{3} \cdot \frac{1-\e}{n}.
\end{equation}
Combining \eqref{identity}, \eqref{a}, \eqref{b0} and the assumption that $|A| \geq \epsilon n$, we obtain
\[\frac{\epsilon}{2}|B_0| > \frac{2|B_0|}{3}\cdot\frac{1-\epsilon}{n}|A| \geq \frac{2}{3}|B_0|(1-\epsilon)\epsilon.\]
This is a contradiction to the fact $\epsilon < 1/4$.
Therefore, it must be that $|B_0| < \frac{6\de n}{\e}$.
\end{poc}

We wish to show that a random walk of length $M\tau$ from most of the vertices visit $A$ at least once. However, it would be problematic if such a random walk visit $B_0$ before $A$. In order to show that visiting $A$ before $B_0$ is more likely for most of the starting vertices, we aim to define the (bad) sets $B_1,\dots, B_M$ in such a way that $B_{i}$ is the set of vertices that the random walk from it is likely to visit $\bigcup_{j=0}^{i-1} B_j$ before $A$.
It turns out to be more convenient to work with the random walk in `chunks' of length $\tau$, that is the Markov chain $\{Q^{i\tau}_v\}_{i\in \mathbb{N}}$. We shall define all the preceding sets $B_i$ with respect to this Markov chain.  

To control the size of those sets, we let $\h_i = 2\cdot3^{i+1} (\de/ \e)^{2^{-i}}$ for each $0\leq i\leq M$. Observe that $\h_i \le \frac{1}{9}\h_{i+1}^2$. Next we inductively define $B_i$ and $B^i$ as follows. Let $B^0 = B_0$, and for each $1\le i \le M$, let $$B_i = \{v : \bP[Q^\tau_v \in B^{i-1}] > \h_i\} \;\;\;\;\;\; \text{and} \;\;\;\;\;\; B^i = \bigcup_{j=0}^i B_j.$$

\begin{claim} \label{bm}
We have that $|B^i| <  \h_i n$ for all $i \le M$.
\end{claim}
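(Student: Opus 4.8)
\textbf{Proof plan for Claim~\ref{bm}.}
The plan is to prove $|B^i| < \h_i n$ by induction on $i$, the base case $i = 0$ being Claim~\ref{claim:B0} (after checking $6\de/\e \le \h_0 = 6(\de/\e)$, which holds with equality). For the inductive step, suppose $|B^{i-1}| < \h_{i-1} n$; I need to bound $|B_i|$, since $|B^i| \le |B^{i-1}| + |B_i|$. The key is the double-counting identity from Proposition~\ref{walknumber}, applied to the pair $(A, B_i)$ for the length-$\tau$ walk:
\begin{equation*}
\sum_{v \in A} \bP[Q^\tau_v \in B_i] = \sum_{v \in B_i} \bP[Q^\tau_v \in A].
\end{equation*}
By the definition of $B_i$, every $v \in B_i$ satisfies $\bP[Q^\tau_v \in B^{i-1}] > \h_i$. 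Since $A$ and $B^{i-1}$ are disjoint (vertices of $A$ have $\bP[Q^\tau_v \in A]$ close to $1$, in particular $\ge \e/2$, so $A \cap B_0 = \varnothing$; for $j \ge 1$ one needs the analogous observation, or one can simply replace $B_i$ by $B_i \setminus A$ throughout, as $A$ is already well-behaved), the events $\{Q^\tau_v \in A\}$ and $\{Q^\tau_v \in B^{i-1}\}$ are disjoint, so $\bP[Q^\tau_v \in A] \le 1 - \h_i$ for $v \in B_i$; crudely $\bP[Q^\tau_v \in A] \le 1$ suffices, giving $\sum_{v \in B_i} \bP[Q^\tau_v \in A] \le |B_i|$.

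For the other side: the vertices of $A$ are $(\tau, \e, 2\de/\e)$-mixing, so for $v \in A$, using $|B_i| \ge \h_i n$ (the assumption for contradiction) and the inequality $2\de/\e = \h_0/3 \le \h_i/3 \le |B_i|/(3n)$,
\begin{equation*}
\bP[Q^\tau_v \in B_i] \ge \Big(|B_i| - \tfrac{2\de n}{\e}\Big)\cdot \tfrac{1-\e}{n} \ge \tfrac{2|B_i|}{3}\cdot\tfrac{1-\e}{n},
\end{equation*}
so $\sum_{v \in A} \bP[Q^\tau_v \in B_i] \ge \e n \cdot \tfrac{2|B_i|}{3}\cdot\tfrac{1-\e}{n} = \tfrac{2}{3}(1-\e)\e |B_i|$. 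Combined with the identity and $\sum_{v\in B_i}\bP[Q_v^\tau\in A]\le |B_i|$, this would give $\tfrac{2}{3}(1-\e)\e |B_i| < |B_i|$, which is true and hence not yet a contradiction — so the crude bound $\bP[Q^\tau_v\in A]\le 1$ is not enough. Instead I must use that $v \in B_i$ forces $\bP[Q^\tau_v \in A] \le 1 - \h_i$, \emph{and} exploit that this must hold for $v$ in $B_i \cap B^{i-1}$ more carefully, or — the cleaner route — bound $\sum_{v \in B_i}\bP[Q_v^\tau \in A]$ by splitting off that only $v\notin B_0$ can have non-tiny values and iterating. Actually the right move, as suggested by the recursion $\h_{i-1} \le \tfrac19\h_i^2$, is: by definition of $B_i$, $\bP[Q^\tau_v \in B^{i-1}] > \h_i$ for all $v\in B_i$, so $\sum_{v\in B_i}\bP[Q_v^\tau\in B^{i-1}] \ge \h_i |B_i|$; by Proposition~\ref{walknumber} this equals $\sum_{v\in B^{i-1}}\bP[Q_v^\tau\in B_i] \le |B^{i-1}| < \h_{i-1}n \le \tfrac19 \h_i^2 n$. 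Hence $\h_i|B_i| < \tfrac19\h_i^2 n$, i.e.\ $|B_i| < \tfrac19\h_i n$, which contradicts $|B_i|\ge \h_i n$ — so in fact $|B_i| < \h_i n$ unconditionally, and even $|B_i| < \tfrac19 \h_i n$.

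Putting this together, $|B^i| \le |B^{i-1}| + |B_i| < \h_{i-1} n + \tfrac19\h_i n$, and since $\h_{i-1}\le \tfrac19\h_i^2 \le \tfrac19\h_i \cdot (\text{something} < 1)$... more simply, from $\h_{i-1} = \tfrac13(\de/\e)^{2^{-(i-1)}}/(\de/\e)^{2^{-i}}\cdot \h_i$-type algebra one checks $\h_{i-1} + \tfrac19\h_i < \h_i$ directly using $\h_{i-1}$ much smaller than $\h_i$ (valid since $\de/\e$ is tiny, guaranteed by the choice $\de < \de_0$), completing the induction. The main obstacle I anticipate is precisely the one flagged above: getting a genuine contradiction requires using the \emph{definition} of $B_i$ in the double-count (pairing $A$ or $B^{i-1}$ against $B_i$ via the sharp threshold $\h_i$), not merely the mixing property of $A$ against $B_i$; once one double-counts $B^{i-1}$ against $B_i$ instead of $A$ against $B_i$, the recursion $\h_{i-1}\le\tfrac19\h_i^2$ does all the work and the argument closes cleanly.
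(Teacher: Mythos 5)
Your final argument is exactly the paper's: you double-count length-$\tau$ walks between $B^{i-1}$ and $B_i$ (via Proposition~\ref{walknumber}, which is just the explicit path count the paper writes out), lower-bound one side by $\h_i|B_i|$ using the defining threshold of $B_i$, upper-bound the other by $|B^{i-1}|<\h_{i-1}n\le\frac19\h_i^2 n$, and conclude $|B_i|<\frac19\h_i n$, then sum. Correct, and the same approach as the paper, after you rightly discard the initial attempt of pairing $A$ against $B_i$.
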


\begin{poc}
We use induction on $i$. The base case $i=0$ holds by Claim~\ref{claim:B0}.
Assume that $|B^{i-1}| < \h_{i-1} n$. The number of paths of length $\tau$ starting from $B^{i-1}$ is at most $D^\tau |B^{i-1}| \le D^{\tau} \h_{i-1} n$. On the other hand, the number of paths of length $\tau$ from $B^{i-1}$ to $B_i$ is at least the following:
\begin{equation*}
\sum_{v \in B_i} D^{\tau} \bP[Q^\tau_v \in B^{i-1}] \ge \sum_{v \in B_i} D^{\tau} \eta_i \ge |B_i| D^{\tau} \eta_i. 
\end{equation*}
Thus, we have that $$|B_i| \le \frac{\h_{i-1} n}{\h_i} \le \frac{\frac{1}{9} \h_i^2}{\h_i} n \le \frac{1}{9} \h_i n.$$ Hence, we have that $|B^i| \le |B^{i-1}| + |B_i| < \h_i n$, proving our claim.
\end{poc}

As $\delta< \frac{1}{10^8}\e \cdot 3^{-M\cdot4^M}$, Claim \ref{bm} yields us that $|B^M| < \eta_M n < (\de/\e)^{1/4^M} n$. We will show that all the vertices outside of $B^M$ are indeed well-mixing. To this end, fix a vertex $v$ outside of $B^M$. 
We want to show that the Markov chain $\{Q^{i\tau}_v\}_{i\in \mathbb{N}}$ is more likely to visit $A$ before it reaches $B_0$. We will show that, it is unlikely that this chain skips one of $B_M, B_{M-1},\dots, B_1$ before it reaches $B_0$, and this provides many chances to visit $A$ before visiting $B_0$.

Denote by $E_i$ the event that $Q^{i\tau}_v \in A$ and denote by $F_i$ the event that $Q^{i\tau}_v \notin B^{M-i}$. 
By the definition of the sets $B^i$, it follows  that for all $0\le i \le M-1$,
\begin{equation}\label{eq:chicken}
\bP[\overline{F}_{i+1} | \cap_{j \le i} F_j] \le \h_{M-i}.
\end{equation}
Next note that if the event $F_i$ happens, then $Q^{i\tau}_v \notin B_0$. 
By the definition of $B_0$, it must be that $\bP[Q^{\tau}_u=Q^{(i+1)\tau}_{v}  \in A | Q^{i\tau}_v=u \text{ for some } u\notin B_0] \ge \e/2$.
Thus, we have the following for all $0\le i\le M-1$:
\begin{equation}\label{eq:burger}
\bP[E_{i+1} | \cap_{j \le i} (\overline{E}_j \cap F_j)] \ge \e/2,
\end{equation}
where the $i=0$ case follows from $v\not\in B^0 \subseteq B^M$.
Using~\eqref{eq:chicken} and~\eqref{eq:burger}, we can then bound the probability of the event that $Q^{i\tau}_v$, $1\le i \le M$, never land in $A$ as follows:
\begin{align*}
\bP[\cap_{i=1}^M \{Q^{i\tau}_v \notin A\}]&=\bP\left[\cap_{i=1}^M \overline{E}_i\right] \\
&\le \bP\left[\cup_{i=1}^M \overline{F}_i\right] + \bP\left[\left(\cap_{i=1}^M \overline{E}_i\right) \bigcap \left(\cap_{i \le M} F_i\right)\right] \\
&= \sum_{i=1}^M \bP\left[\;\overline{F}_i | \cap_{j \le i-1} F_j\right] + \prod_{i=1}^M \bP\left[\;\overline{E}_i \cap F_i | \cap_{j \le i-1} (\overline{E}_j \cap F_j)\right] \\
&\le \sum_{i=1}^M \h_{M-i+1} + \prod_{i=1}^M \bP\left[\;\overline{E}_i | \cap_{j \le i-1} (\overline{E}_j \cap F_j)\right] \\
&\le \sum_{i=1}^M \h_{M-i+1} + (1-\e/2)^M \\
&\le 2e^{-\e M/2}. 
\end{align*}
The last inequality follows from our choice of $\delta$ that $(\de/\e)^{1/4^M}\leq 3^{-M}< e^{-\e M/2}$. 
Thus, with probability at least $1 - 2e^{-\e M/2}$, the sequence $\{Q^{i\tau}_v\}_{i=1}^M$ has a vertex from $A$. Denote this event by $\E$. 

For $u \in A$ and $1\le i \le M$, define the event 
$$\E_{i,u}=\Big\{Q^{i\tau}_v = u \text{ and } u \text{ is the first vertex to appear from $A$ in the sequence } \{Q^{i\tau}_v\}_{i=1}^M\Big\}.$$ 
Then, by definition, $\E$ is the disjoint union of $\E_{i,u}$'s. Let 
$$p_{i,u} = \bP[\E_{i,u}] \quad \text{and} \quad p = \sum_{1\le i \le M, u \in A} p_{i,u}=\bP(\E).$$ 
It is clear that $p \ge 1 - 2e^{-\e M/2}$. We now estimate the probability that  $Q^{(M+1)\tau}_v $ lands at a vertex $w$ in $\G$. By the Markov property:
\begin{align*}
\bP\left[Q^{(M+1)\tau}_v = w\right] &= \sum_{1\le i \le M, u \in A} p_{i,u} \cdot \bP\left[Q^{(M+1)\tau}_v = w | Q^{i\tau}_v = u\right] + (1-p)\cdot \bP\left[Q^{(M+1)\tau}_v = w | \E^c\right] \\
&= \sum_{1\le i \le M, u \in A} p_{i,u}\cdot  \bP\left[Q^{(M+1-i)\tau}_u = w\right] + (1-p)\cdot \bP\left[Q^{(M+1)\tau}_v = w | \E^c\right].
\end{align*}
As $\sum_{w\in V(\G)} \bP[Q^{(M+1)\tau}_v = w |  \E^c] =1$, by the triangle inequality, we deduce that
\begin{align}
\|Q^{(M+1)\tau}_v - U\| &= \frac{1}{2} \sum_{w \in V(\G)} \left|\bP\left[Q^{(M+1)\tau}_v = w\right] - \frac{1}{n}\right| \nonumber \\
&\le (1-p) + \frac{1}{2} \sum_{w \in V(\G)} \sum_{1\le i \le M, u \in A} p_{i,u}\cdot \left|\bP\left[Q^{(M+1-i)\tau}_u = w\right] - \frac{1}{n}\right| \nonumber \\
&= 2e^{-\e M/2} + \sum_{1\le i \le M, u \in A} p_{i,u}\cdot  \|Q^{(M+1-i)\tau}_u - U\| \label{finalbound}. 
\end{align}
As $A$ is a nonempty set, it is easy to see that the largest connected component $\G^*$ of $\G$ contains at least $(1-\de)n$ vertices with $A\subseteq \G^*$ and $\|U(\G^*)-U\|\leq  2\delta$.
Invoking Lemma \ref{decreasing} and recalling that $u\in A$, we can use the triangle inequality to obtain that for each $t \ge \tau$,
\begin{align*}
\|Q^t_u - U\| &\le\|Q^t_u - U(\G^*) \| + \|U(\G^*) - U\| \leq \|Q^{\tau}_u - U(\G^*)\|+ \|U(\G^*) - U\| \\ &\leq \|Q^{\tau}_u - U\|+ \|U - U(\G^*)\|+ \|U(\G^*) - U\| 
\le 6  \de.	
\end{align*}
Using this in \eqref{finalbound}, we get that $\|Q^{(M+1)\tau}_v - U\| < 2e^{-\e M/2} + 6\de$. 
This finishes the proof of Theorem \ref{thm2}.
\end{proof}
 
 \section*{Acknowledgment}
 We thank an anonymous reviewer for suggesting the connection between well-mixing vertices and separators in Section~\ref{sec:mix-exp-sep} and for helping us improve the exposition of this paper.

\end{document}